\title{Nonregular ideals}
\author{Monroe Eskew}
\address{Universit\"at Wien \\
Institut f\"ur Mathematik \\
Kurt G\"odel Research Center \\
Augasse 2-6, UZA 1 - Building 2 \\
1090 Wien \\
AUSTRIA}
\date{}
\newtheorem{theorem}{Theorem}
\newtheorem{lemma}[theorem]{Lemma}
\newtheorem{proposition}[theorem]{Proposition}
\DeclareMathOperator{\dom}{dom}
\DeclareMathOperator{\ot}{ot}
\DeclareMathOperator{\cf}{cf}
\DeclareMathOperator{\add}{Add}
\DeclareMathOperator{\id}{id}
\newcommand{\p}{\mathcal{P}}
\newcommand{\la}{\langle}
\newcommand{\ra}{\rangle}
\subjclass[2010]{03E05, 03E35}
\keywords{ideals, regularity, forcing}
\begin{document}
\maketitle

\begin{abstract}
Generalizing Keisler's notion of regularity for ultrafilters, Taylor introduced degrees of regularity for ideals and showed that a countably complete nonregular ideal on $\omega_1$ must be somewhere $\omega_1$-dense.  We prove a dichotomy about degrees of regularity for $\kappa$-complete ideals on successor cardinals $\kappa$ and apply this to show that Taylor's Theorem does not generalize to higher cardinals.  In particular, the existence of a nonregular ideal on $\omega_2$ does not imply the existence of an $\omega_2$-dense ideal on $\omega_2$.  We obtain similar results for normal ideals on $\p_\kappa(\lambda)$.
\end{abstract}

 An \emph{ideal} on a set $X$ is a collection of subsets of $X$ closed under taking subsets and pairwise unions.  If $\kappa$ is a cardinal, an ideal $I$ is called \emph{$\kappa$-complete} if it is also closed under unions of size less than $\kappa$.  An ideal $I$ on $X$ is called \emph{nonprincipal} when for all $x \in X$, $\{x \} \in I$, and it is called \emph{proper} when $X \notin I$.  In this paper, we assume all our ideals are nonprincipal and proper.  An ideal $I$ on $X$ gives a notion of a ``negligible'' subset of $X$, and members of $I$ are called \emph{$I$-measure-zero}.  Subsets of $X$ which are not in $I$ are called \emph{$I$-positive}, and the collection of these is typically denoted by $I^+$.  The dual filter to $I$, the collection of all complements of members of $I$, constitutes the collection \emph{$I$-measure-one} sets and will be denoted by $I^*$.  If an ideal $I$ renders every subset of $X$ either measure zero or measure one, then its dual filter is called an \emph{ultrafilter}.
 

The notion of regularity of ultrafilters was introduced by Keisler \cite{keisler} and has had many applications in set theory and model theory \cite{ck}.  An ultrafilter $\mathcal U$ is called \emph{$(\alpha,\beta)$-regular} when there is a sequence of sets $\la A_i : i < \beta \ra \subseteq \mathcal U$ such that for all $z \subseteq \beta$ of ordertype $\alpha$, $\bigcap_{i \in z} A_i = \emptyset$.  Taylor \cite{t1} generalized this notion to arbitrary filters (or equivalently, ideals), defining an ideal $I$ to be $(\alpha,\beta)$-regular when for every sequence $\la A_i : i < \beta \ra \subseteq I^+$, there is a \emph{refinement} $\la B_i : i < \beta \ra \subseteq I^+$, which means $B_i \subseteq A_i$ for each $i$, such that for all $z \subseteq \beta$ of ordertype $\alpha$, $\bigcap_{i \in z} B_i = \emptyset$.  An ideal on a cardinal $\kappa$ is called simply \emph{regular} when it is $(\omega,\kappa)$-regular.  Taylor showed some connections between regularity properties of ideals and the structure of their associated quotient Boolean algebras, most notably the following:

\begin{theorem}[Taylor]
\label{omega1}
A countably complete ideal $I$ on $\omega_1$ is nonregular if and only if there is a set $A \in I^+$ such that $\p(A)/I$ contains a dense set of size $\omega_1$.
\end{theorem}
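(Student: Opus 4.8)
The plan is to prove both implications, after a reformulation. For a sequence $\la B_i : i < \omega_1 \ra$ of subsets of $\omega_1$, the condition that $\bigcap_{i \in z} B_i = \emptyset$ for every $z \subseteq \omega_1$ of ordertype $\omega$ is the same as saying that each point of $\omega_1$ lies in only finitely many of the $B_i$; call such a sequence \emph{point-finite}. Thus $I$ is regular exactly when every $\la A_i : i < \omega_1\ra \subseteq I^+$ has a point-finite refinement lying in $I^+$, and nonregular exactly when some \emph{bad} sequence $\la A_i : i < \omega_1 \ra \subseteq I^+$ has no such refinement. I will also use the standard fact that if $I$ is a countably complete, proper, nonprincipal ideal on a set of size at most $\omega_1$, then $\p(A)/I$ is atomless for every $A \in I^+$: an atom would make $I \restriction A$ a maximal ideal, hence its dual a countably complete nonprincipal ultrafilter on a subset of $\omega_1$, entailing a measurable cardinal at most $\omega_1$, which is absurd.

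For the implication from a small dense set to nonregularity, suppose $A \in I^+$ and $\{[A_i] : i < \omega_1\}$ is dense in $\p(A)/I$, with each $A_i \subseteq A$ (padding with repetitions if the given dense set has size $< \omega_1$). I claim $\la A_i : i < \omega_1 \ra$ is bad. If $\la B_i : i < \omega_1\ra \subseteq I^+$ refines it, then $\{[B_i]\}$ is again dense, since $[B_i] \le [A_i]$ and any member of the original dense set sitting below a given positive element sits below the corresponding $[B_i]$. Assume $\la B_i \ra$ is point-finite and set $A^{(n)} = \{x \in A : x \in B_i \text{ for at least } n \text{ indices } i\}$; since $\bigcap_n A^{(n)} = \emptyset$, $A$ is the countable union of the successive differences $A^{(n)} \setminus A^{(n+1)}$, so by countable completeness some $C = A^{(n_0)} \setminus A^{(n_0+1)}$ is positive, and density forces $n_0 \ge 1$ (if $n_0 = 0$, a $[B_i] \le [C]$ would put $B_i$ into $I$, as $C$ is disjoint from every $B_i$). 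Every point of $C$ lies in exactly $n_0$ of the $B_i$. Now recursively choose distinct indices $i_0, i_1, \dots$ and set $C_0 = C$, $C_{k+1} = C_k \cap B_{i_k}$: given $C_k \in I^+$, density yields $[B_i] \le [C_k]$ and hence $C_k \cap B_i \in I^+$, and one may take $i_k \notin \{i_0, \dots, i_{k-1}\}$, for otherwise $C_k \cap B_i \in I$ for every $i$ outside that finite set, and then density below each nonzero $[C'] \le [C_k]$ forces $[C'] = [C_k]$ (using $C_k \subseteq B_{i_\ell}$ for $\ell < k$), so $[C_k]$ is an atom, contrary to the fact above. But $C_k = C \cap B_{i_0} \cap \dots \cap B_{i_{k-1}}$ with the $i_\ell$ distinct, so $C_{n_0+1} = \emptyset$, contradicting $C_{n_0+1} \in I^+$. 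Hence $\la A_i \ra$ is bad and $I$ is nonregular.

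For the converse — nonregularity yields a positive $A$ with $\p(A)/I$ of density $\le \omega_1$ — I would start from a bad sequence $\la A_i : i < \omega_1\ra$ and first normalize. A routine argument (peeling off the part of $\omega_1$ where a point lies in only finitely many $A_i$, checking that a bad sequence survives on the remainder, and disposing of a side case in which only countably many $A_i$ meet that part positively) should let me assume every point of $\omega_1$ lies in infinitely many $A_i$, so $\bigcup_i A_i =_I \omega_1$. I would then try to arrange in addition that $\{[A_i] : i < \omega_1\}$ is dense in $\p(\omega_1)/I$ — which finishes the proof with $A = \omega_1$ — by shrinking: whenever some positive $C$ witnesses the failure of density, i.e. $A_i \setminus C \in I^+$ for all $i$, replace $\la A_i\ra$ by $\la A_i \setminus C \ra$ (still bad, since a point-finite positive refinement of it would be one of the original), discard the now-irrelevant part $C$ of the underlying set, renormalize, and iterate. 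Each step strictly lowers the class of the underlying set in $\p(\omega_1)/I$.

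The step I expect to be the main obstacle is the termination of this shrinking. Because $\p(\omega_1)/I$ need not be well-founded, one cannot simply run the recursion through $\omega_1$ stages: at a limit stage of countable cofinality the new underlying set is a countable intersection of earlier ones and may drop into $I$, and the bad sequences discarded on the intermediate pieces need not be ``good'', so a refinement cannot obviously be reassembled across them. Making this work — presumably by choosing the witnesses $C$ to the failure of density carefully enough that the process is forced to halt, and using countable completeness of $I$ essentially at the limit stages — is the heart of the converse; the remaining content is bookkeeping together with the two applications of the no-atoms fact.
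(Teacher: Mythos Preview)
The paper does not give its own proof of this theorem; it is quoted as Taylor's result, and the paper remarks that the substantive implication (nonregular $\Rightarrow$ $\omega_1$-dense below some $A$) is the contrapositive of the Baumgartner--Hajnal--M\'at\'e theorem that a countably complete ideal on $\omega_1$ which is nowhere $\omega_1$-dense has the disjoint refinement property (equivalently, is regular). The paper's Lemma~\ref{split} is a $\p_\kappa(\lambda)$ analogue of this with saturation in place of density, and its proof shows the shape of the BHM argument.

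Your argument for the direction ``$\omega_1$-dense below some $A$ $\Rightarrow$ nonregular'' is correct; the atomlessness step and the recursive choice of indices are fine. This is what the paper calls ``easy to see''.

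For the converse you have a genuine gap, and you have located it accurately: the shrinking recursion you propose is a descent in $\p(\omega_1)/I$, and there is no reason it should terminate --- countable completeness of $I$ does not by itself control countable-cofinality limits of the process, since you have no handle on the sets $C$ you are removing. The route the paper points to avoids this entirely by contraposition: assume $I$ is nowhere $\omega_1$-dense and, given an arbitrary $\la A_\alpha : \alpha<\omega_1\ra\subseteq I^+$, build a disjoint refinement by a recursion of length $\omega_1$ (for each $\alpha$ exploit non-density below $A_\alpha$ to pick $B_\alpha\subseteq A_\alpha$ avoiding, modulo $I$, the earlier choices; compare the proof of Lemma~\ref{split}). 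This is a forward construction along $\omega_1$, not a descent in the quotient, so no well-foundedness issue arises. Your ``work from a bad sequence toward density'' strategy is in the harder direction and, as stated, is not a proof.
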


Taylor also discussed degrees of regularity indexed by three ordinals.  An ideal $I$ is said to be \emph{$(\alpha,\beta,\gamma)$-regular} when for every sequence $\la A_i : i < \gamma \ra \subseteq I^+$, there is a refinement $\la B_i : i < \gamma \ra \subseteq I^+$
such that for every $x \subseteq \gamma$ of ordertype $\beta$, $|\bigcap_{i \in x} B_i| \leq \alpha$.  We note the following easy relations between the regularity properties:
\begin{enumerate}
\item If $\alpha_0 < \alpha_1$, then $(\alpha_0,\beta,\gamma)$-regularity implies $(\alpha_1,\beta,\gamma)$-regularity.
\item If $\beta_0 < \beta_1$, then $(\alpha,\beta_0,\gamma)$-regularity implies $(\alpha,\beta_1,\gamma)$-regularity.
\item If $\gamma_0 < \gamma_1$, then $(\alpha,\beta,\gamma_1)$-regularity implies $(\alpha,\beta,\gamma_0)$-regularity.
\end{enumerate}


Taylor \cite{t1} showed that if $I$ is a $\kappa$-complete ideal on a regular uncountable cardinal $\kappa$, then $I$ is $(\omega,\kappa)$-regular if and only if it is $(2,\kappa)$-regular.  The latter is known as the \emph{disjoint refinement property} or \emph{Fodor's property} \cite{bhm}. 
In \cite{thesis}, the author showed that under GCH, many more degrees of regularity are equivalent for $\kappa$-complete ideals on $\kappa$, where $\kappa$ is the successor of a regular cardinal, and this was used to examine the relationship between regularity and density of ideals on cardinals above $\omega_1$.  In this paper as elsewhere, we only consider degrees of regularity of ideals on $\kappa$ for which the last index in the degree is at most $\kappa$.   Under this restriction, we show that without any assumptions, there are only two possible flavors of two-variable regularity at successor cardinals, and with GCH, only two possible flavors of three-variable regularity:

\begin{theorem}
\label{succ}
Suppose $\mu$ is an infinite cardinal, $\kappa = \mu^+$, and $I$ is a $\kappa$-complete ideal on $\kappa$.  Then:
\begin{enumerate}
\item\label{zfcreg} $I$ is $(\cf(\mu)+1,\kappa)$-regular, $(1,\cf(\mu),\kappa)$-regular, and $(2,\delta)$-regular for $\delta <\kappa$.
\item\label{regdich} If $I$ is $(\cf(\mu),\kappa)$-regular, then $I$ is $(2,\kappa)$-regular.
\item\label{gchreg} If $I$ is $(\alpha,\beta,\kappa)$-regular for some $\alpha,\beta < \kappa$ such that $\mu^\beta = \mu$, then $I$ is $(2,\kappa)$-regular.
\end{enumerate}
\end{theorem}

Part (\ref{zfcreg}) already appeared in \cite{t1} for the case of $\mu$ regular, and though the extension to the general case does not require essentially new ideas, we include a proof here for completeness.  We show similar results for $\kappa$-complete normal ideals on $\p_\kappa(\lambda)$.  

We will say a normal ideal on $Z \subseteq \p(\lambda)$ is simply \emph{regular} when it is $(2,\lambda)$-regular.  Following Taylor \cite{t1}, we show in Proposition \ref{regdef} that $(2,\lambda)$-regularity is equivalent to $(\omega,\lambda)$-regularity for such ideals, so this definition accords with the original terminology of Keisler.

%
%
%


It is easy to see that a $\lambda$-dense normal ideal on $\p(\lambda)$ is nonregular.  Taylor's Theorem uses a result of Baumgarter-Hajnal-M\'at\'e \cite{bhm}, who showed that if a countably complete ideal on $\omega_1$ is nowhere $\omega_1$-dense, then it has the disjoint refinement property.  This generalizes to normal ideals $I$ on $Z \subseteq \p_\kappa(\lambda)$ for $\kappa$ a successor cardinal, with an additional assumption about the quotient Boolean algebra $\p(Z)/I$ that is trivially satisfied for $\kappa = \omega_1$ (see \cite{thesis}).  However, it is possible to separate density and nonregularity above $\omega_1$:

\begin{theorem}
\label{dense}
Suppose $\kappa = \mu^+$, $\omega_1\leq \cf(\mu)$, $\kappa \leq \lambda$, and there is a nonregular, $\kappa$-complete, normal ideal on $\p_\kappa(\lambda)$.  There is a cardinal-preserving forcing extension that also has such an ideal, but in which there are no $\lambda$-dense, $\kappa$-complete, normal ideals on $\p_\kappa(\lambda)$.
\end{theorem}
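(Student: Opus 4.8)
The plan is to fix a ground model $V$ carrying a nonregular, $\kappa$-complete, normal ideal $I_0$ on $\p_\kappa(\lambda)$ and to pass to a cardinal‑preserving extension $V^{\mathbb Q}$ in which such an ideal still exists but no $\lambda$-dense one does. One first reduces to the case $\lambda^{<\kappa}=\lambda$: if $J$ were a $\lambda$-dense normal ideal on $\p_\kappa(\lambda)$, then $\p(\p_\kappa(\lambda))/J$ would be a complete Boolean algebra with a dense subset of size $\lambda$, so forcing with it produces a generic $j\colon V\to M$ with $\crit(j)=\kappa$, $j(\kappa)>\lambda$, and $M$ closed under $\lambda$-sequences, whence $\lambda^{<\kappa}=\lambda$. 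Thus if $\lambda^{<\kappa}>\lambda$ in $V$ there is nothing to destroy and $\mathbb Q$ may be taken trivial, so we assume $\lambda^{<\kappa}=\lambda$.

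To kill $\lambda$-density it then suffices to force $\lambda^{<\kappa}>\lambda$. The choice is $\mathbb Q=\add(\omega,\lambda^+)$: it is ccc, hence cardinal preserving, and forces $2^{\aleph_0}>\lambda$, so — using $\kappa\geq\omega_2$, which follows from $\cf(\mu)\geq\omega_1$ — $2^{<\kappa}>\lambda$ and therefore $\lambda^{<\kappa}>\lambda$, leaving no room for a $\lambda$-dense normal ideal on $\p_\kappa(\lambda)$. The reason $\cf(\mu)\geq\omega_1$ is exploited here is that $\mathbb Q$ then has all of its antichains of size $<\cf(\mu)\leq\kappa$; consequently every $\mathbb Q$-name for a subset of $\p_\kappa(\lambda)$ has fewer than $\kappa$ possible realizations, so every $\lambda$-sequence of $I_0$-sets occurring in $V^{\mathbb Q}$ is covered, coordinate by coordinate, by a $V$-sequence of families of $I_0$-sets of size $<\kappa$.

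The heart of the proof is that the normal ideal $I_1$ on $\p_\kappa(\lambda)$ generated over $V^{\mathbb Q}$ by $I_0$ together with the fine sets $\{z\colon\alpha\notin z\}$ (which must be adjoined since $\mathbb Q$ adds new countable subsets of $\lambda$, hence new points of $\p_\kappa(\lambda)$) is a proper, $\kappa$-complete, normal ideal which is again nonregular. Normality and $\kappa$-completeness follow from the coordinatewise covering above and the $\kappa$-completeness of $I_0$ in $V$: a diagonal union of $I_0$-sets in $V^{\mathbb Q}$ is absorbed into a $<\kappa$-sized union of diagonal unions of ground‑model sequences, hence into an $I_0$-set. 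Properness is a closure/club argument: a putative covering of $\p_\kappa(\lambda)$ by such a diagonal union together with a fine set would, by the covering and the properness of $I_0$, be defeated by a $z$ closed under the relevant Skolem functions and lying outside a fixed $I_0$-set. For nonregularity one invokes the $\p_\kappa(\lambda)$-analogue of Theorem \ref{succ}: since $I_0$ is nonregular it is not $(\cf(\mu),\lambda)$-regular, so $V$ has a sequence $\langle A_i\colon i<\lambda\rangle$ of $I_0$-positive sets admitting no refinement into $I_0$-positive sets all of whose intersections along index sets of order type $\cf(\mu)$ are empty; the $A_i$ stay $I_1$-positive. If $I_1$ were $(\cf(\mu),\lambda)$-regular in $V^{\mathbb Q}$, a witnessing refinement $\langle B_i\rangle$ of $\langle A_i\rangle$ would exist below some condition $p$; reading off from the names $\langle\dot B_i\rangle$ the ground‑model sets $B_i'=\{z\colon p\text{ does not force }z\notin\dot B_i\}$ yields a $V$-refinement of $\langle A_i\rangle$ into $I_0$-positive sets, and — using that $\cf(\mu)$ is regular and uncountable and that $p$ forces the multiplicity of $\langle\dot B_i\rangle$ to be below $\cf(\mu)$, so that the extra index sets contributed in passing to $V$ cannot push the multiplicity of $\langle B_i'\rangle$ up to $\cf(\mu)$ — this refinement again has all intersections along order‑type‑$\cf(\mu)$ index sets empty, contradicting the choice of $\langle A_i\rangle$. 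Hence $I_1$ is not $(\cf(\mu),\lambda)$-regular, and since $(2,\lambda)$-regularity implies $(\cf(\mu),\lambda)$-regularity, $I_1$ is nonregular.

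The step I expect to be the main obstacle is exactly this last preservation of nonregularity — quantitatively, controlling how much the multiplicity of $\langle B_i'\rangle$ can exceed that of $\langle B_i\rangle$ when one reflects back to $V$, without invoking any GCH-type hypothesis below $\kappa$. It is the uncountability of $\cf(\mu)$ that makes the whole argument possible, and it is used twice: it is what makes ``nonregular'' coincide with ``not $(\cf(\mu),\lambda)$-regular'' for $\kappa$-complete normal ideals at $\kappa=\mu^+$ (Theorem \ref{succ}), and it is what allows a single cardinal-preserving poset — Cohen forcing at $\omega$ — to simultaneously be ``small'' relative to $\kappa$ in the chain‑condition sense needed for the preservation, and to push $\lambda^{<\kappa}$ above $\lambda$; at $\cf(\mu)=\omega$ both halves of this break down.
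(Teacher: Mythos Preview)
Your preservation argument for nonregularity is correct and is essentially the paper's Lemma~\ref{preservenonreg}: pull back a putative $(\cf(\mu),\lambda)$-regular refinement through the ccc forcing and use Theorem~\ref{succgen}. The ``main obstacle'' you anticipate is not one.

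The genuine gap is in your destruction of $\lambda$-density. You assert that a $\lambda$-dense, $\kappa$-complete, normal ideal on $\p_\kappa(\lambda)$ forces $\lambda^{<\kappa}=\lambda$, with the justification ``$M$ closed under $\lambda$-sequences, whence $\lambda^{<\kappa}=\lambda$.'' This implication is not established by what you wrote. Closure of $M$ under $\lambda$-sequences from $V[G]$ gives $\p(\mu)^{V[G]}\subseteq M$ and similar facts, but it does not obviously bound $|\p_\kappa(\lambda)|^V$ by $\lambda$; the natural attempts run into the problem that $j$ need not take $\p_\kappa(\lambda)^V$ to a set whose $M$-cardinality you can control by $\lambda$. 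If this implication were available, the paper's machinery would be unnecessary, and Lemma~\ref{densepart} would be stated with $\lambda$ in place of $\lambda^{<\kappa}$.

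The paper's route is substantially different. It forces with $\add(\omega,\theta)$ for a regular $\theta\geq\lambda^\mu$ with $\theta^\mu=\theta$ (not $\lambda^+$), and then splits the analysis of a hypothetical $\lambda$-dense ideal in $V[G]$ into two cases. If the ideal concentrates on some $Z\subseteq\p_\kappa(\lambda)$ of size $<\theta$, then $Z$ lies in an intermediate extension $V[G_0]$, and Lemma~\ref{nodense} (which uses Foreman's Duality Theorem to identify $\p(Z)/\bar I$ with a two-step iteration ending in a long $\add(\omega,\cdot)$) shows the ideal cannot be $\lambda$-dense. If instead every positive set has size $\theta=\lambda^{<\kappa}$, Lemma~\ref{densepart} extracts a polarized partition relation
\(
\left(\begin{smallmatrix}\lambda^+\\ \theta\end{smallmatrix}\right)\to\left(\begin{smallmatrix}\omega\\ \theta\end{smallmatrix}\right)_2,
\)
which Lemma~\ref{killpart} shows fails after adding $\theta$ Cohen reals. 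Your argument, as written, handles neither case: pushing $2^{\aleph_0}$ past $\lambda$ does not by itself exclude a $\lambda$-dense ideal living on a stationary set of size $\lambda^{<\kappa}>\lambda$ consisting largely of new points of $\p_\kappa(\lambda)$.
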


By results in \cite{thesis}, the existence of a $\lambda$-dense, $\kappa$-complete, normal ideal on $\p_\kappa(\lambda)$, where $\kappa = \mu^+$, is consistent relative to an almost-huge cardinal, for any choice of regular cardinals $\mu<\kappa\leq\lambda$.

%

\section{The regularity dichotomy}

This section is devoted to a proof of Theorem \ref{succ}.  We will prove some more general facts about the regularity of normal ideals on $\p(\lambda)$ and show how they imply the desired results about $\kappa$-complete ideals on successor cardinals $\kappa$.

Our notations are mostly standard.  By $\p_\kappa(\lambda)$ we mean $\{ z \subseteq \lambda : |z| < \kappa \}$.  If $x$ is a set of ordinals, then $\ot(x)$ denotes its ordertype.  The notations $[\lambda]^\kappa$ and $[\lambda]^{<\kappa}$ stand for $\{ z \subseteq \lambda : \ot(z) = \kappa \}$ and $\{ z \subseteq \lambda : \ot(z) < \kappa \}$ respectively.

The following facts can be found in \cite{foremanhandbook}.  Recall that an ideal $I$ on $Z \subseteq \p(X)$ is \emph{normal} when for all $x \in X$, $\hat x := \{ z \in Z : x \in z \} \in I^*$, and for all sequences $\la A_x : x \in X \ra \subseteq I$, the diagonal union $\nabla_{x \in X} A_x := \bigcup_{x \in X} (A_x \cap \hat x) \in I$.  This is equivalent to the statement that for every $A \in I^+$ and every $f : A \to X$ such that $f(z) \in z$ for all $z \in A$, there is $B \in I^+$ such that $f$ is constant on $B$.

The smallest normal ideal on a set $Z$ is the \emph{nonstationary ideal on $Z$}, which is the dual ideal to the \emph{club filter} (closed-unbounded filter) generated by sets of the form $\{ z \in Z : f[z^{<\omega}] \subseteq z \}$, where $f$ is a function from $X^{<\omega}$ to $X$.  As the name suggests, positive sets for the nonstationary ideal are called \emph{stationary}.  Consequently, if there is a (proper) normal ideal on $Z \subseteq \p(X)$, then $Z$ is stationary.  


A normal ideal $I$ on $Z \subseteq \p(X)$ is \emph{$\delta$-saturated} for a cardinal $\delta$ if there is no sequence $\la A_\alpha : \alpha < \delta \ra$ such that $A_\alpha \cap A_\beta \in I$ for $\alpha < \beta$, and simply \emph{saturated} if it is $|X|^+$-saturated.  If $I$ is saturated, then $\p(Z)/I$ is a complete Boolean algebra, with suprema given by diagonal unions.   If $\la A_x : x \in X \ra$ is an antichain, then we can use normality to refine it to a pairwise disjoint sequence of $I$-positive sets by replacing $A_x$ with $A_x \cap \hat x \setminus \bigcup_{y \not= x} (A_y \cap \hat y)$.


The idea behind the following lemma is taken from \cite{bhm}.

\begin{lemma}
\label{split}
Suppose $I$ is an ideal on $Z \subseteq \p(\lambda)$, $\delta \leq \lambda$, and $I$ is either normal or $\delta$-complete.  If there is no $A \in I^+$ such that $I \restriction A$ is $\delta^+$-saturated, then $I$ is $(2,\delta)$-regular.
\end{lemma}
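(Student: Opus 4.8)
We may assume $\delta$ is infinite, the finite case being a short direct recursion.  The plan is to derive the disjoint refinement from a single auxiliary partition of $\p(\lambda)$.  Fix $\la A_i : i < \delta\ra \subseteq I^+$; we want positive $B_i \subseteq A_i$, pairwise disjoint.  Let $\theta$ be the least cardinal with $\theta > |i|$ for every $i < \delta$; since $\delta \leq \lambda$ one has $\theta \leq \lambda$, and also $\theta \leq \delta^+$.  We will use twice that $I$, being normal on $\p(\lambda)$, is $\lambda^+$-complete, so a union of at most $\lambda$ members of $I$ lies in $I$; and that, as remarked above, an antichain of length at most $\lambda$ can be refined to a genuinely pairwise disjoint family of positive sets.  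Together with the hypothesis this yields: every $C \in I^+$ contains $\theta$ pairwise disjoint positive subsets (take an antichain below $C$ of size $\delta^+ \geq \theta$ and disjointify $\theta$ of its members).

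First I would reduce to the claim that there is a partition $\mathcal W$ of $\p(\lambda)$ modulo $I$ into at most $\lambda$ positive sets such that each $A_i$ meets at least $\theta$ members of $\mathcal W$.  Granting this, a recursion of length $\delta$ finishes: at step $i$ choose an as-yet-unused $W \in \mathcal W$ with $A_i \cap W \in I^+$, which exists because $A_i$ meets $\ge\theta > |i|$ members of $\mathcal W$, and put $B_i = A_i \cap W$; distinct members of $\mathcal W$ are disjoint, so the $B_i$ are pairwise disjoint positive sets below the $A_i$.  To build $\mathcal W$ I would construct a refining chain $\la \mathcal W_i : i < \delta\ra$ of partitions of $\p(\lambda)$ modulo $I$, each into at most $\lambda$ positive sets, with the invariant that $A_k$ meets at least $\theta$ members of $\mathcal W_i$ whenever $k \leq i$.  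At a limit stage one takes the common refinement and discards its pieces lying in $I$: as there are at most $\lambda$ pieces and $I$ is $\lambda^+$-complete this is again a partition modulo $I$, and $A_k$ still meets $\ge\theta$ pieces, since each of the $\ge\theta$ pieces it met at an earlier stage has, among its at most $\lambda$ sub-pieces in the refinement, one that $A_k$ meets positively.  At a successor stage $i$: if $A_i$ already meets $\ge\theta$ pieces, keep $\mathcal W_i = \mathcal W_{<i}$; otherwise choose one piece $W$ that $A_i$ meets positively and split $W$ alone into $\theta$ pairwise disjoint positive subsets of $A_i \cap W$ together with a remainder.  Splitting only one piece per stage keeps the partitions of size at most $\lambda$ and ensures every earlier $A_k$ forfeits at most one of the pieces it met.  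Finally take $\mathcal W$ to be the common refinement of all the $\mathcal W_i$.

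The main obstacle --- and the reason for splitting one piece at a time --- is the bookkeeping needed to keep every $\mathcal W_i$ of size at most $\lambda$ through all $\delta$ stages: if a partition grew larger, then at a limit stage the union of the discarded pieces could fail to lie in $I$, since the completeness of $I$ only reaches $\lambda$, and the whole construction would collapse.  So the real content is to verify that the single refinement at each stage both cures the current $A_i$ and preserves the invariant for all earlier $A_k$, and that iterating and repeatedly taking common refinements never produces more than $\lambda$ pieces.  A minor point is the boundary case $\delta = \lambda$, where $\theta$ must be taken to be $\lambda$ (not $\lambda^+$); the hypothesis still supplies $\lambda$-sized disjoint families, which is all the construction asks for.
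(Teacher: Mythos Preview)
Your argument rests on the claim that a normal ideal on $\p(\lambda)$ is $\lambda^+$-complete, i.e., that a union of $\lambda$ sets from $I$ lies in $I$.  This is false.  Normality gives closure under \emph{diagonal} unions indexed by $\lambda$, not under arbitrary $\lambda$-sized unions; for instance the nonstationary ideal on $\p_{\omega_1}(\omega_2)$ is normal but only $\omega_1$-complete, since $\bigcup_{\alpha<\omega_1}\{z:\alpha\notin z\}$ is all of $\p_{\omega_1}(\omega_2)$.  You invoke $\lambda^+$-completeness at every limit stage --- to discard the null pieces of the common refinement and to argue that each $A_k$ still meets a sub-piece of each old piece --- and nothing in the hypotheses supplies it.  The disjointification of an antichain of length $\le\lambda$ that you cite \emph{does} follow from normality alone, but that is a diagonal-union computation, not an appeal to completeness.

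There is a second, independent problem: the assertion that ``splitting only one piece per stage keeps the partitions of size at most $\lambda$'' is unjustified at limit stages.  The common refinement of a refining chain can have far more pieces than any term of the chain.  For a toy instance, start from one piece and at each of $\omega$ successor stages split one current piece into two, scheduling the splits so that every piece ever created is eventually split; at stage $\omega$ the common refinement is indexed by $2^\omega$.  So even granting $\lambda^+$-completeness, the bookkeeping does not keep the partition small, and the limit step collapses for exactly the reason you flagged as the ``main obstacle.''

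The paper avoids both issues by not building a global partition at all.  It fixes a $\delta^+$-antichain $\la B_\alpha^\beta:\beta<\delta^+\ra$ below each $A_\alpha$, defines $f(\alpha)\le\alpha$ as the least $\gamma$ for which $A_\alpha$ meets $\delta^+$-many of the $B_\gamma^\beta$, finds a single threshold $\xi<\delta^+$ above which $A_\alpha\cap B_{\alpha'}^\beta\in I$ whenever $\alpha'<f(\alpha)$, and then picks $C_\alpha=A_\alpha\cap B_{f(\alpha)}^{\beta_\alpha}$ with strictly increasing $\beta_\alpha\ge\xi$.  Disjointness modulo $I$ follows by cases on whether $f(\alpha)=f(\alpha')$, and the final disjointification is a single diagonal union.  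No completeness beyond normality is needed.
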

\begin{proof}
Let $\la A_\alpha : \alpha < \delta \ra \subseteq I^+$, and for each $A_\alpha$, choose a sequence of $I$-positive sets $\la B_\alpha^\beta : \beta < \delta^+ \ra$ such that each $B_\alpha^\beta \subseteq A_\alpha$ and $B_\alpha^\beta \cap B_\alpha^{\beta'} \in I$ when $\beta < \beta' < \delta^+$.  For each $\alpha < \delta$, let $f(\alpha) \leq \alpha$ be the minimal ordinal such that $| \{ \beta : A_\alpha \cap B_{f(\alpha)}^\beta \in I^+ \} | = \delta^+$.
We can find $\xi < \delta^+$ such that for all $\alpha < \delta$, all $\alpha' < f(\alpha)$, and all $\beta \geq \xi$, $A_\alpha \cap B^\beta_{\alpha'} \in I$.

Recursively choose a refinement $\la C_\alpha : \alpha < \delta \ra$ of $\la A_\alpha : \alpha < \delta \ra$ and an increasing sequence of ordinals $\la \beta_\alpha : \alpha < \delta \ra \subseteq\delta^+$ as follows.  Let $C_0 = B_0^\xi$ and $\beta_0 = \xi$.  Given $\la C_{\alpha'} : \alpha' < \alpha \ra$, let $C_\alpha$ be an $I$-positive set of the form $A_\alpha \cap B_{f(\alpha)}^{\beta_\alpha}$, where $\beta_\alpha \geq \sup_{\alpha' < \alpha}( \beta_{\alpha'}+1)$.  Note that whenever $\alpha \not= \alpha'$ are less than $\delta$, it is ensured that $C_{\alpha} \cap C_{\alpha'} \in I$.  This is because if $f(\alpha) = f(\alpha') = \eta$, then $B^{\beta_{\alpha'}}_\eta \cap B^{\beta_{\alpha}}_\eta \in I$ by construction, and if $f(\alpha) < f(\alpha')$, then $B^{\beta_\alpha}_{f(\alpha)} \cap A_{\alpha'} \in I$ since $\beta_\alpha \geq \xi$.

Finally, we refine $\la C_\alpha : \alpha < \delta \ra$, to a pairwise disjoint sequence $\la D_\alpha : \alpha < \delta \ra$.  If $I$ is normal, we put $D_\alpha = C_\alpha \cap \hat \alpha \setminus \bigcup_{\alpha' \not= \alpha} (C_{\alpha'} \cap {\hat\alpha}^\prime)$.  If $I$ is $\delta$-complete, we put $D_\alpha = C_\alpha \setminus \bigcup_{\alpha'<\alpha} C_{\alpha'}$.
\end{proof}

\begin{proposition}
\label{regdef}
A normal ideal on $\p(\lambda)$ is $(\omega,\lambda)$-regular if and only if it is $(2,\lambda)$-regular.
\end{proposition}
\begin{proof}
Suppose $I$ is an $(\omega,\lambda)$-regular normal ideal on $\p(\lambda)$.  Let $\la A_\alpha : \alpha < \lambda \ra \subseteq I^+$.  We may assume that $A_\alpha \subseteq \hat \alpha$ for each $\alpha<\lambda$.  Let $\la B_\alpha : \alpha < \lambda \ra \subseteq I^+$ be a refinement such that $\bigcap_{\alpha \in x} B_\alpha = \emptyset$ whenever $x \subseteq \lambda$ is infinite.  For each $z \in \bigcup_{\alpha<\lambda} B_\alpha$, let $s(z)$ be the finite set $\{ \alpha : z \in B_\alpha \}$.  Note that $s(z) \subseteq z$.

Using the normality of $I$, for each $\alpha<\lambda$, there is an $I$-positive $C_\alpha \subseteq B_\alpha$ such that $s$ is constant on $C_\alpha$ with value $g(\alpha) \in [\lambda]^{<\omega}$.  Note that if $g(\alpha) \not= g(\beta)$, then $C_\alpha \cap C_\beta = \emptyset$.

Now since $I$ is $(\omega,\lambda)$-regular and countably complete, there is no $A \in I^+$ such that the dual of $I \restriction A$ is a ultrafilter.  Therefore every $I$-positive set can be partitioned into two disjoint $I$-positive sets, and thus infinitely many.  Hence for every $x \in [\lambda]^{<\omega}$, the sequence $\la C_\alpha : \alpha \in x \ra$ has a disjoint refinement $\la D^x_\alpha : \alpha \in x \ra \subseteq I^+$ by Lemma \ref{split}.  Finally, let $E_\alpha = D^{g(\alpha)}_\alpha$.  Then $\la E_\alpha : \alpha < \lambda \ra$ is a pairwise disjoint refinement of $\la A_\alpha : \alpha<\lambda \ra$.
\end{proof}

The above argument can be generalized to show that for a $\kappa$-complete normal ideal on $\p(\lambda)$ and $\mu<\kappa$, $(\mu,\lambda)$-regularity implies $(2,\lambda)$-regularity if the ideal concentrates on $z  \subseteq \lambda$ such that $|z|^{<\mu} = |z|$.  However, we ultimately want to prove something more general about successor $\kappa$.  The following lemma contains the key combinatorial idea.

\begin{lemma}
\label{main}
Suppose $I$ is a normal ideal on $\p(\lambda)$, $\mu$ is a cardinal such that $\{ z \subseteq \lambda : \cf(\sup z) \geq \mu \} \in I^*$, and for all $A \in I^+$ and $\delta < \lambda$, $I \restriction A$ is not $\delta^+$-saturated.  If $I$ is $(\mu,\lambda)$-regular, then $I$ is regular.
\end{lemma}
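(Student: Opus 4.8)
The plan is to use a $(\mu,\lambda)$-regular refinement to bound, for each $z$, how many of the given sets contain $z$, then to record with a regressive function an element of $z$ lying above all the relevant indices, and finally to glue together disjoint refinements of short subsequences supplied by Lemma~\ref{split}. Concretely, given $\langle A_i : i < \lambda\rangle \subseteq I^+$, I would first replace each $A_i$ by $A_i \cap \hat i$, so that $z \in A_i$ implies $i \in z$, and then apply $(\mu,\lambda)$-regularity to obtain a refinement $\langle A_i' : i < \lambda\rangle$ in which every subset of $\lambda$ of ordertype $\mu$ gives an empty intersection of the corresponding sets. The effect is that for each $z$ the set $S_z := \{ i < \lambda : z \in A_i'\}$ is a subset of $z$ of size $< \mu$. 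Since $\{ z : \cf(\sup z) \geq \mu\} \in I^*$ and $|S_z| < \mu$, for $I$-almost every $z$ the set $S_z$ is bounded below $\sup z$, so on a measure-one set I can define a regressive function $g$ by taking $g(z)$ to be the least element of $z$ strictly above $\sup S_z$; then $g(z) \in z$ and $g(z) > i$ for all $i \in S_z$.

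Now let $G_\gamma = \{ z : g(z) = \gamma\}$ for $\gamma < \lambda$; these partition $\dom(g)$, and $G_\gamma \subseteq \hat\gamma$ because $\gamma = g(z) \in z$. Since each $G_\gamma \subseteq \hat\gamma$, the union of those $G_\gamma$ that lie in $I$ is a diagonal union of sets in $I$ and hence in $I$, so the levels $\gamma$ with $G_\gamma \in I^+$ cover a measure-one set; the same diagonal-union observation, applied to the sets $A_i' \cap G_\gamma \subseteq \hat\gamma$, shows that for each fixed $i$, since $A_i' \cap \dom(g) \in I^+$ is the union over $\gamma$ of the sets $A_i' \cap G_\gamma$, some $A_i' \cap G_\gamma$ must be positive (and then $i < \gamma$, since $z \in A_i' \cap G_\gamma$ forces $i \in S_z$ and $\gamma = g(z) > i$). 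For each $\gamma$ with $G_\gamma \in I^+$, the sequence $\langle A_i' \cap G_\gamma : i < \gamma\rangle$ has length $\gamma < \lambda$, and since by hypothesis $I \restriction A$ is not $\gamma^+$-saturated for any $A \in I^+$, Lemma~\ref{split} gives that $I$ is $(2,\gamma)$-regular; so I can refine the positive members of this sequence (padding the remaining entries with $G_\gamma$ itself) to a pairwise disjoint family $\langle B_i^\gamma : i < \gamma\rangle$ of positive sets with $B_i^\gamma \subseteq A_i' \cap G_\gamma$. Setting $B_i = \bigcup_\gamma B_i^\gamma$, I get $B_i \subseteq A_i' \subseteq A_i$; $B_i \in I^+$, because fixing a $\gamma$ with $A_i' \cap G_\gamma \in I^+$ (which exists by the above), $B_i^\gamma$ is one of the positive sets in the disjoint refinement at level $\gamma$; and the $B_i$ are pairwise disjoint, pieces at a common level $\gamma$ by the construction at that level and pieces at distinct levels $\gamma \neq \gamma'$ because $G_\gamma \cap G_{\gamma'} = \emptyset$. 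Hence $\langle B_i : i < \lambda\rangle$ is a disjoint refinement of $\langle A_i : i < \lambda\rangle$, so $I$ is $(2,\lambda)$-regular, i.e., regular.

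The step I expect to be delicate is letting normality stand in for a completeness the ideal need not possess: both the discarding of the $I$-small fibers $G_\gamma$ and the extraction, for each $i$, of a level $\gamma$ with $A_i' \cap G_\gamma$ positive depend on all of these sets sitting inside the corresponding $\hat\gamma$, so that the relevant unions over $\gamma < \lambda$ are in fact diagonal unions. The other point needing care is the construction of $g$: it is exactly the combination of $(\mu,\lambda)$-regularity, which yields $|S_z| < \mu$, with $\{z : \cf(\sup z) \geq \mu\} \in I^*$, which yields that $S_z$ is bounded in $\sup z$, that makes a genuinely regressive $g$ exceeding all of $S_z$ available; these two hypotheses are used nowhere else.
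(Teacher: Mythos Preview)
Your proof is correct and follows essentially the same strategy as the paper's: use $(\mu,\lambda)$-regularity plus the cofinality hypothesis to define a regressive function bounding the index set $S_z$, stratify by its values, and apply Lemma~\ref{split} within each stratum of length $<\lambda$. The paper organizes this only slightly differently---rather than partitioning into fibers $G_\gamma$, it stabilizes the regressive function on each $B_\alpha$ via normality to a constant value $g(\alpha)>\alpha$, takes a disjoint refinement $\langle D^\alpha_\beta : \beta<\alpha\rangle$ of $\langle C_\beta : \beta<\alpha\rangle$ for each $\alpha$, and sets $E_\alpha = D^{g(\alpha)}_\alpha$; cross-level disjointness then comes from $f$ being a function, exactly as $G_\gamma\cap G_{\gamma'}=\emptyset$ does for you. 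One small slip to fix: for padded indices $i$ (those with $A_i'\cap G_\gamma\in I$) your $B_i^\gamma$ need not lie inside $A_i'$, so $B_i=\bigcup_\gamma B_i^\gamma$ may fail to refine $A_i$; simply set $B_i^\gamma=\emptyset$ for such $i$ rather than padding, and the rest of the argument goes through unchanged.
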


\begin{proof}
Let $\la A_\alpha : \alpha < \lambda \ra \subseteq I^+$.  We may assume that for all $\alpha <\lambda$ and all $z \in A_\alpha$,  $\cf(\sup z) \geq \mu$ and $\alpha \in z$.  Since $I$ is $(\mu,\lambda)$-regular, there is a sequence of $I$-positive sets $\la B_\alpha : \alpha < \lambda \ra $ such that $B_\alpha \subseteq A_\alpha$ for all $\alpha$, and for all $z$, $s(z) := \{ \alpha : z \in B_\alpha \}$ has size $<\mu$.  Note that $s(z) \subseteq z$.  For all $z \in \bigcup_{\alpha<\lambda}B_\alpha$, since $\cf(\sup z) \geq \mu$, $s(z)$ is not cofinal in $z$.  Thus let $f : \bigcup_{\alpha<\lambda} B_\alpha \to \lambda$ be such that $s(z) \subseteq f(z) \in z$.  By normality, for all $\alpha<\lambda$, there is an $I$-positive $C_\alpha \subseteq B_\alpha$ on which $f$ is constant.  Let $g(\alpha)$ be this constant value, and note that $g(\alpha) > \alpha$.

For each $\alpha < \lambda$, choose a pairwise disjoint refinement $\langle D^\alpha_\beta : \beta < \alpha \rangle \subseteq I^+$ of $\langle C_\beta : \beta < \alpha \rangle$, using Lemma \ref{split}.  Then let $E_\alpha = D_\alpha^{g(\alpha)}$.  If $g(\alpha_0) = g(\alpha_1)$, then $E_{\alpha_0} \cap E_{\alpha_1} = \emptyset$ by construction.   If $g(\alpha_0) \not= g(\alpha_1)$, then $E_{\alpha_0} \cap E_{\alpha_1} = \emptyset$, since for $i<2$ and $z \in E_{\alpha_i}$, $f(z) = g(\alpha_i)$.
\end{proof}

\begin{lemma}
\label{upperlim}
Suppose $\mu,\lambda$ are regular cardinals, and $I$ is a normal ideal on $\p(\lambda)$ such that $\{ z \subseteq \lambda : \cf(\sup z) = \mu \} \in I^*$.
Then $I$ is $(\mu+1,\lambda)$-regular.  If the function $z \mapsto \sup z$ is $\leq \delta$ to one on a set in $I^*$, then $I$ is $(\delta,\mu,\lambda)$-regular.
\end{lemma}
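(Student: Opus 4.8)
The plan is to refine an arbitrary sequence $\langle A_\alpha : \alpha<\lambda\rangle\subseteq I^+$ by keeping, for each $z$, only those indices $\alpha$ that are $\langle$-least among the ``active'' indices of $z$ lying in a suitable block of $\sup z$; then the set of surviving indices at any fixed $z$ picks out at most one point per block and hence has small ordertype.

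First I would make the routine reductions. If $\{z:\cf(\sup z)=\mu\}\in I^*$ and $I$ is proper, that set is nonempty, so $\mu$ is a cofinality, hence regular, so $\cf(\mu)=\mu$; and we may assume $\mu<\lambda$ (if $\mu=\lambda$ then $\{\alpha:z\in B_\alpha\}\subseteq\lambda$ automatically has ordertype $\le\lambda$, and $\mu>\lambda$ makes the hypothesis vacuous). Replacing $A_\alpha$ by $A_\alpha\cap\hat\alpha\cap\{z:\cf(\sup z)=\mu\}$ we may assume $z\in A_\alpha\Rightarrow \alpha\in z$ and $\cf(\sup z)=\mu$, and then, $\lambda$ being regular, $\alpha<\sup z<\lambda$. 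For each $\beta<\lambda$ with $\cf(\beta)=\mu$ fix a continuous increasing cofinal map $c_\beta\colon\mu\to\beta$ with $c_\beta(0)=0$; for $\alpha<\beta$ call the interval $[c_\beta(\xi),c_\beta(\xi+1))$ containing $\alpha$ the \emph{block} of $\alpha$ below $\beta$. Writing $S(z):=\{\gamma:z\in A_\gamma\}$, so $S(z)\subseteq z$, set $B_\alpha:=\{z\in A_\alpha : \alpha \text{ is the least element of } S(z)\cap J_z\}$, where $J_z$ is the block of $\alpha$ below $\sup z$. For each $z$ the set $\{\alpha:z\in B_\alpha\}$ contains at most one point from each of the $\mu$ blocks of $\sup z$, and these blocks are pairwise disjoint intervals, so $\{\alpha:z\in B_\alpha\}$ is order-isomorphic to a subset of $\mu$ and hence has ordertype $\le\mu$; thus no $z$ lies in $B_\alpha$ for a set of $\alpha$'s of ordertype $\cf(\mu)+1=\mu+1$, which is exactly $(\cf(\mu)+1,\lambda)$-regularity --- provided each $B_\alpha\in I^+$. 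Moreover, if $\ot\{\alpha:z\in B_\alpha\}$ equals $\mu$ then this set meets cofinally many blocks, and since its point in block $\xi$ is $\ge c_{\sup z}(\xi)$, it is cofinal in $\sup z$.

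The main obstacle is proving $B_\alpha\in I^+$ for every $\alpha$; this is where the hypothesis $\{z:\cf(\sup z)=\mu\}\in I^*$ and the normality of $I$ must really be used, and I expect it to be the bulk of the argument. The natural attack is a pressing‑down argument: fixing $\alpha$ and supposing $B_\alpha\in I$, the set $A_\alpha\setminus B_\alpha$ is co-$I$ in $A_\alpha$, and on it the map $z\mapsto\min(S(z)\cap J_z)$ is a regressive function into $\lambda$ --- its values lie in $z$ and are $<\alpha$ --- so by normality it is constant, with some value $\gamma_0<\alpha$, on a positive set $M\subseteq A_\alpha$; then $M\subseteq A_{\gamma_0}$, the ordinal $\gamma_0$ lies in the same block as $\alpha$ below $\sup z$ for each $z\in M$, and for $z\in M$ the ordinal $\gamma_0$ is the least element of $S(z)$ in its block, i.e.\ $M\subseteq B_{\gamma_0}$, so $B_{\gamma_0}\in I^+$. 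Turning this local analysis into the statement that \emph{every} $B_\alpha$ is positive --- e.g.\ by iterating this pushing‑down, or by choosing the maps $c_\beta$ adapted to the $A_\alpha$ so that each $\alpha$ lies in $\ran(c_{\sup z})$ for an $I$-positive set of $z\in A_\alpha$ (for such $z$, $\alpha$ is the least element of its own block and of $S(z)\cap J_z$ automatically) --- is the delicate point I would have to work out.

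For the last assertion, suppose in addition that $z\mapsto\sup z$ is $\le\delta$-to-one on some $W\in I^*$; intersecting each $A_\alpha$ also with $W$ we may further assume $A_\alpha\subseteq W$, whence $B_\alpha\subseteq W$. Let $x\subseteq\lambda$ with $\ot(x)=\cf(\mu)=\mu$ and let $z\in\bigcap_{i\in x}B_i$. Then $x\subseteq\{\alpha:z\in B_\alpha\}$, which forces $\ot\{\alpha:z\in B_\alpha\}=\mu$, since it is always $\le\mu$; by the remark above $\{\alpha:z\in B_\alpha\}$ is then cofinal in $\sup z$, and $x$, being a subset of it of the same ordertype $\mu$, is cofinal in it as well, so $\sup x=\sup z$. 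Hence $\bigcap_{i\in x}B_i\subseteq\{z\in W:\sup z=\sup x\}$, a set of size $\le\delta$, so $\bigl|\bigcap_{i\in x}B_i\bigr|\le\delta$. As $x$ of ordertype $\cf(\mu)$ was arbitrary, $\langle B_\alpha:\alpha<\lambda\rangle$ witnesses $(\delta,\cf(\mu),\lambda)$-regularity.
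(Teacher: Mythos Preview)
Your reductions are fine, and the combinatorial idea that at each $z$ the surviving indices should pick out at most one point per block of a fixed cofinal decomposition of $\sup z$ is correct in spirit; your derivation of the $(\delta,\cf(\mu),\lambda)$-conclusion from the refinement is also correct. The gap is exactly where you locate it: proving $B_\alpha\in I^+$ for every $\alpha$. Neither of your suggested repairs works as stated. The pressing-down argument shows only that if $B_\alpha\in I$ then some $B_{\gamma_0}$ with $\gamma_0<\alpha$ is positive, which says nothing about $B_\alpha$ itself; iterating just produces further positive $B_{\gamma}$'s with smaller indices and never returns to $\alpha$. The alternative of tailoring the $c_\beta$'s so that $\alpha\in\ran(c_{\sup z})$ on a positive subset of each $A_\alpha$ imposes $\lambda$-many constraints that must be met by the $\mu$-sized sets $\ran(c_\beta)$; you give no construction, and there is no evident reason one exists in general (e.g.\ if many $A_\alpha$ concentrate on $z$'s with the same $\sup z$, a single $c_\beta$ with $|\ran(c_\beta)|=\mu$ cannot contain all the relevant $\alpha$'s).

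The paper avoids this difficulty by reversing the order of construction: instead of fixing a block decomposition first and then trying to verify positivity, it builds the refinement and an auxiliary strictly increasing sequence of marker ordinals $\langle\alpha_j:j<\lambda\rangle$ simultaneously by induction on $j$. At stage $j$, since $\lambda$ is regular and $\hat\gamma\in I^*$ for every $\gamma$, one has $\sup z>\sup_{i<j}\alpha_i$ for $I$-almost all $z\in A_j$; for each such $z$ choose $\alpha_j(z)\in c_z$ above this bound (where $c_z\subseteq z$ is any fixed cofinal set of ordertype $\mu$), and press down on the regressive map $z\mapsto\alpha_j(z)$ to obtain a positive $B_j\subseteq A_j$ on which it is constant with value $\alpha_j$. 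Positivity is then immediate from normality, and $z\in B_j$ forces $\alpha_j\in c_z$, so $j\mapsto\alpha_j$ embeds $\{j:z\in B_j\}$ order-preservingly into $c_z$; the rest proceeds exactly as in your last paragraph. The key point is that pressing-down is applied \emph{inside} the inductive construction of the $B_j$'s, producing positivity for free, rather than after defining all $B_\alpha$ at once.
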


\begin{proof}
Let $\la A_i : i < \lambda \ra \subseteq I^+$.  Let $Z = \{ z \subseteq \lambda : \cf(\sup z) = \mu \}$.
For each $z \in Z$, let $c_z \subseteq z$ be a cofinal subset of ordertype $\mu$.    By induction, we build an increasing sequence $\la \alpha_i : i < \lambda \ra \subseteq \lambda$ and a refinement $\la B_i : i < \lambda \ra \subseteq I^+$ of $\la A_i : i < \lambda \ra$  as follows.  Given $\la \alpha_i : i < j \ra$, $\sup z > \sup_{i<j} \alpha_i$ for $I$-almost all $z \in A_j$. For such $z$, let $\sup_{i<j} \alpha_i < \alpha_j(z) \in c_z$.  Let $B_j \subseteq A_j$ be an $I$-positive set on which the function $z \mapsto \alpha_j(z)$ is constant, and let $\alpha_j$ be this constant value.  For each $z$, let $s(z) = \{ i < \lambda : z \in B_i \}$.  Note that $z \in B_i$ implies $\alpha_i \in c_z$, so $\ot(s(z)) \leq \mu$.  This establishes the claim that $I$ is $(\mu+1,\lambda)$-regular.  For the second claim, note that if $\ot(s(z)) = \mu$, then $s(z)$ is cofinal in $c_z$ and thus in $z$.  Thus, if $z \mapsto \sup z$ is $\leq \delta$ to one on a set in $I^*$, then we may take the sequence $\la B_i : i < \lambda \ra$ such that 
$|\bigcap_{i \in x} B_i | \leq \delta$ whenever $\ot(x) = \mu$.
\end{proof}



%

The following result was independently observed by Burke-Matsubara \cite{bm-strength} and Foreman-Magidor \cite{fm-mutual}.  Its proof uses deep results of Shelah \cite{shelahproper} and Cummings \cite{cummingscollapse}.

\begin{lemma}
\label{cofcon}
Suppose $I$ is a normal saturated ideal on $\p(\lambda)$. Then $\{ z : \cf(\sup z) = \cf(|z|) \} \in I^*$.
\end{lemma}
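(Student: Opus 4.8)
The plan is to pass to a generic ultrapower and reduce the assertion to a statement about cofinalities in the extension, where the quoted results of Shelah and Cummings supply the substance. Let $Z \subseteq \p(\lambda)$ be the support of $I$, force with $\p(Z)/I$, and let $G$ be generic with associated generic elementary embedding $j : V \to M = \mathrm{Ult}(V,G) \subseteq V[G]$; write $\kappa = \crit(j)$. Since $I$ is normal and fine, $[\mathrm{id}]_G = j[\lambda]$ and $j[\lambda] \in M$; since $I$ is saturated, i.e.\ $\lambda^+$-saturated, the quotient $\p(Z)/I$ has the $\lambda^+$-chain condition and $M^\lambda \cap V[G] \subseteq M$ (see \cite{foremanhandbook}). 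By {\L}o\'s's theorem, for a parameter-free formula $\varphi$ the set $\{z : \varphi(z)\}$ lies in $I^*$ exactly when $M \models \varphi(j[\lambda])$, so it suffices to show, for an arbitrary generic $G$, that
\[ M \models \cf\bigl(\sup j[\lambda]\bigr) = \cf\bigl(|j[\lambda]|\bigr). \]

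The next step is to unwind both sides using the closure of $M$. As $j$ is order preserving, $j[\lambda]$ has order type $\lambda$ and is cofinal in $\eta := \sup j[\lambda]$, so $\cf^M(\eta) = \cf^M(\lambda)$ and $|j[\lambda]|^M = |\lambda|^M$. Because every $\le\lambda$-sequence of ordinals of $V[G]$ already belongs to $M$, any cofinal map into, or bijection onto, an ordinal $\le\lambda$ present in $V[G]$ is present in $M$; hence $\cf^M(\lambda) = \cf^{V[G]}(\lambda)$ and $|\lambda|^M = |\lambda|^{V[G]} =: \nu$, and likewise $\cf^M(\nu) = \cf^{V[G]}(\nu)$. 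The target therefore becomes
\[ \cf^{V[G]}(\lambda) = \cf^{V[G]}(\nu). \]
If $\p(Z)/I$ preserves the cardinality of $\lambda$ then $\nu = \lambda$ and this is trivial. Otherwise $\lambda$ is collapsed; since $\kappa = \crit(j)$ remains a cardinal in $V[G]$ (again by the closure of $M$) and the forcing has the $\lambda^+$-chain condition, $\kappa \le \nu < \lambda$, every $V$-cardinal in $[\nu^+,\lambda]$ acquires cardinality $\nu$, and $\nu^+ = \lambda^+$ in $V[G]$: the forcing collapses the interval $[\nu^+,\lambda]$ onto $\nu$.

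This configuration is where the real difficulty resides, and it is the main obstacle: the preceding steps are bookkeeping about the generic ultrapower, but ruling out that the quotient forcing singularizes $\lambda$ ``incoherently'' — i.e.\ that $\cf^{V[G]}(\lambda) \neq \cf^{V[G]}(\nu)$, which would mean the $V$-regular cardinals in $[\kappa,\lambda]$ are collapsed by $\p(Z)/I$ in a manner not dictated by PCF — is exactly the nontrivial content. Here one invokes Shelah's analysis of the effect of the $\lambda^+$-chain condition on cofinalities and covering numbers \cite{shelahproper} to constrain the possible values of $\cf^{V[G]}(\lambda)$ when $\lambda$ is regular in $V$, together with Cummings's study of how the successor of a singular cardinal can be collapsed \cite{cummingscollapse} for the delicate case in which $\lambda$, or $\nu$, is a successor of a singular, where the collapse can occur but only with a prescribed cofinality. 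Combining these with the reduction above gives $\cf^{V[G]}(\lambda) = \cf^{V[G]}(\nu)$ in all cases, which is the lemma. (When $\kappa = \omega_1$ the statement is immediate, since $\cf(\sup z) = \omega = \cf(|z|)$ holds on the club of infinite $z$ closed under successor; this is why the corresponding hypothesis used in the generalizations of Baumgartner--Hajnal--M\'at\'e and of Taylor's theorem is automatic at $\omega_1$.)
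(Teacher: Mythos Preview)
The paper does not give its own proof of this lemma: it is stated as a result observed independently by Burke--Matsubara and Foreman--Magidor, with the remark that the proof relies on deep work of Shelah and Cummings. Your outline --- pass to the generic ultrapower, use {\L}o\'s and normality to reduce the question to whether $M \models \cf(\sup j[\lambda]) = \cf(|j[\lambda]|)$, use the $\lambda$-closure of $M$ in $V[G]$ (from saturation) to translate this into $\cf^{V[G]}(\lambda) = \cf^{V[G]}(|\lambda|^{V[G]})$, and then appeal to the Shelah/Cummings machinery for the substantive case where $\lambda$ is collapsed --- is exactly the standard route and is consistent with the paper's citations. In fact you have supplied more detail than the paper does.

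The one place where your write-up is no more precise than the paper is the final step: you correctly identify that the real content is ruling out $\cf^{V[G]}(\lambda) \neq \cf^{V[G]}(\nu)$ under a $\lambda^+$-c.c.\ collapse of $\lambda$ to $\nu$, but the invocation of \cite{shelahproper} and \cite{cummingscollapse} remains a black box. That is not a defect relative to the paper, which treats the lemma as a citation, but if you wanted a self-contained argument you would need to state precisely which theorems of Shelah and Cummings are being used and how they combine.
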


The following basic fact can be proved in multiple ways, for example via Ulam matrices or via generic ultrapowers (see \cite{foremanhandbook}).

\begin{lemma}
\label{nonsat}
If $\kappa$ is a successor cardinal, then no $\kappa$-complete ideal on $\kappa$ is $\kappa$-saturated, and no $\kappa$-complete normal ideal on $\p_\kappa(\lambda)$ is $\lambda$-saturated.
\end{lemma}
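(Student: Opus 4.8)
The plan is to prove the first assertion by the classical Ulam matrix — which needs nothing beyond $\kappa$-completeness — and the second by a generic ultrapower, which settles all $\lambda\geq\kappa$ uniformly; either technique can in fact be adapted to do both. Throughout write $\kappa=\mu^{+}$.

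For the first assertion, for each ordinal $\beta$ with $\mu\leq\beta<\kappa$ fix a bijection $h_{\beta}\colon\beta\to\mu$, and for $\xi<\kappa$ and $\alpha<\mu$ set
\[
A^{\xi}_{\alpha}=\{\,\beta:\mu\leq\beta<\kappa,\ \xi<\beta,\ h_{\beta}(\xi)=\alpha\,\}.
\]
Injectivity of the $h_{\beta}$ makes the column $\la A^{\xi}_{\alpha}:\xi<\kappa\ra$ pairwise disjoint for each fixed $\alpha$, while for each fixed $\xi$ the union $\bigcup_{\alpha<\mu}A^{\xi}_{\alpha}=\{\beta:\mu\leq\beta<\kappa,\ \xi<\beta\}$ has bounded complement in $\kappa$. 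A $\kappa$-complete ideal $I$ contains every bounded subset of $\kappa$, so each such union lies in $I^{*}$. Now suppose $I$ is also $\kappa$-saturated. For each $\alpha<\mu$ the pairwise disjoint family $\la A^{\xi}_{\alpha}:\xi<\kappa\ra$ has fewer than $\kappa$, hence at most $\mu$, members in $I^{+}$, so $U_{\alpha}:=\{\xi:A^{\xi}_{\alpha}\in I^{+}\}$ has size at most $\mu$; thus $U:=\bigcup_{\alpha<\mu}U_{\alpha}$ has size at most $\mu<\kappa$, and we may fix $\xi^{*}\in\kappa\setminus U$. Then $A^{\xi^{*}}_{\alpha}\in I$ for all $\alpha<\mu$, yet $\bigcup_{\alpha<\mu}A^{\xi^{*}}_{\alpha}\in I^{*}$ — a union of $\mu<\kappa$ members of $I$, contradicting $\kappa$-completeness. (Normality plays no role here, as it should not.)

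For the second assertion, suppose $I$ is a $\kappa$-complete normal ideal on $\p_{\kappa}(\lambda)$ that is $\lambda$-saturated. Then $\p(\p_{\kappa}(\lambda))/I$ is $\lambda$-c.c., and by the standard theory of saturated ideals \cite{foremanhandbook} $I$ is precipitous, so forcing with this algebra yields a well-founded generic ultrapower $j\colon V\to M\subseteq V[G]$ with $\crit(j)=\kappa$; moreover $\hat\alpha\in I^{*}\subseteq G$ for every $\alpha<\lambda$, so $j''\lambda\subseteq[\id]_{G}$, and by {\L}o\'s $[\id]_{G}$ is, in $M$, a set of ordinals below $j(\lambda)$. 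Since $j$ is order-preserving, $j''\lambda$ has ordertype $\lambda$, so $M$ computes the ordertype, hence the cardinality, of $[\id]_{G}$ to be at least $|\lambda|$; on the other hand $[\id]_{G}\in j(\p_{\kappa}(\lambda))$, so by elementarity $M$ believes $|[\id]_{G}|<j(\kappa)$. Because $\crit(j)=\kappa$ we have $j(\mu)=\mu$, whence $j(\kappa)=j(\mu^{+})=(\mu^{+})^{M}$ and $\mu$ remains a cardinal of $M$; combining the two computations, $M$ believes $|\lambda|\leq\mu$. As $M\subseteq V[G]$, the witnessing surjection lies in $V[G]$, so $\lambda$ is collapsed in $V[G]$ — contradicting that $\lambda$-c.c.\ forcing preserves the cardinal $\lambda$.

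The routine parts are the matrix combinatorics and the count $|U|<\kappa$. The one essential external input is the package invoked for the second assertion — precipitousness of a $\lambda$-saturated normal ideal, well-foundedness and critical point $\kappa$ of the generic ultrapower, and $[\id]_{G}$ representing at least $j''\lambda$ — which I would cite from \cite{foremanhandbook} rather than reprove, and the point I would be most careful about is that the last step use only preservation of $\lambda$ itself under $\lambda$-c.c.\ forcing, so that $\lambda$ is allowed to be singular or a limit cardinal. When $\lambda$ is regular an elementary alternative for the second assertion mimics the Ulam matrix on $\p_{\kappa}(\lambda)$: let $e_{z}\colon z\to\ot(z)$ be the increasing enumeration and set $A^{\xi}_{\alpha}=\{z:\xi\in z,\ e_{z}(\xi)=\alpha\}$ for $\xi<\lambda$ and $\alpha<\mu$, noting $\ot(z)\leq\mu$. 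Each column $\la A^{\xi}_{\alpha}:\xi<\lambda\ra$ is pairwise disjoint, so $\lambda$-saturation gives $|\{\xi:A^{\xi}_{\alpha}\in I^{+}\}|<\lambda$ for every $\alpha$; with $\mu<\lambda$ regular one finds $\xi^{*}<\lambda$ outside all of these, and then $\bigcup_{\alpha<\mu}A^{\xi^{*}}_{\alpha}=\widehat{\xi^{*}}\in I^{*}$ is a $\mu$-union of $I$-sets, again contradicting $\kappa$-completeness.
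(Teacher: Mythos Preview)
Your proof is correct and follows exactly the two routes the paper itself points to---Ulam matrices for the first clause and generic ultrapowers for the second---which the paper does not spell out but merely cites from \cite{foremanhandbook}. Two small remarks: the step ``$\lambda$-c.c.\ forcing preserves the cardinal $\lambda$'' for singular $\lambda$ does not follow from the usual covering argument alone; it follows from the Erd\H{o}s--Tarski theorem that the saturation of a Boolean algebra is always a regular cardinal, so a $\lambda$-c.c.\ forcing with $\lambda$ singular is already $\theta$-c.c.\ for some regular $\theta<\lambda$. And in your alternative Ulam-matrix argument on $\p_\kappa(\lambda)$, the increasing enumeration $e_z\colon z\to\ot(z)$ has range an ordinal $<\kappa$, not $\leq\mu$ as you wrote; replace it by an arbitrary injection $e_z\colon z\to\mu$ (available since $|z|\leq\mu$) and the argument goes through verbatim.
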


\begin{theorem}
\label{succgen}
Suppose $\kappa = \mu^+$ and $I$ is a $\kappa$-complete normal ideal on $\p_\kappa(\lambda)$.  If $I$ is $(\cf(\mu),\lambda)$-regular, then $I$ is regular.  If $\lambda$ is a regular cardinal, then $I$ is $(\cf(\mu)+1,\lambda)$-regular.
\end{theorem}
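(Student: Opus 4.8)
The plan is to reduce the statement to two special configurations, handle each with the lemmas already in hand, and then reassemble via a maximal antichain. Throughout I would work with the extension of $I$ to a normal ideal on all of $\p(\lambda)$ obtained by declaring every $z$ with $|z|\geq\kappa$ to be null; this changes nothing about normality, $\kappa$-completeness, the degrees of regularity, or $\delta$-saturation. Two preliminary observations: first, since $\mu<\kappa$ and $I$ is $\kappa$-complete and fine, $\{z:\mu\subseteq z\}=\bigcap_{\alpha<\mu}\hat\alpha\in I^*$, and every such $z$ has $|z|=\mu$; hence $\cf(|z|)=\cf(\mu)$ for $I$-almost every $z$. Second, by Lemma \ref{nonsat} together with the fact that $\delta^+$-saturation implies $\lambda$-saturation whenever $\delta<\lambda$, no restriction $I\restriction A$ with $A\in I^+$ is $\delta^+$-saturated for $\delta<\lambda$; thus the second hypothesis of Lemma \ref{main} is automatically satisfied by $I$ and all its restrictions.

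Next I would isolate the two special cases. Suppose $A\in I^+$ and $I\restriction A$ is $\lambda^+$-saturated. By Lemma \ref{cofcon} and the first paragraph, $\{z:\cf(\sup z)=\cf(\mu)\}\in(I\restriction A)^*$. If $I$ is $(\cf(\mu),\lambda)$-regular then so is $I\restriction A$, and Lemma \ref{main}, applied with its parameter taken to be $\cf(\mu)$, shows $I\restriction A$ is regular. If instead $\lambda$ is regular, then Lemma \ref{upperlim}, applied with its parameter taken to be $\cf(\mu)$ and using $\cf(\cf(\mu))=\cf(\mu)$, shows $I\restriction A$ is $(\cf(\mu)+1,\lambda)$-regular. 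The complementary case: if $A\in I^+$ and $I\restriction A$ is nowhere $\lambda^+$-saturated, then Lemma \ref{split} with $\delta=\lambda$ gives that $I\restriction A$ is $(2,\lambda)$-regular, hence regular and a fortiori $(\cf(\mu)+1,\lambda)$-regular.

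To reassemble, I would use that below any $A\in I^+$ there is a positive $B\subseteq A$ with $I\restriction B$ either $\lambda^+$-saturated or nowhere $\lambda^+$-saturated (take $B=A$ in the latter case, and otherwise pick $B\subseteq A$ witnessing that $I\restriction A$ is not nowhere-saturated). Hence there is a maximal antichain $\mathcal W$ in $\p(\p_\kappa(\lambda))/I$, refined to be genuinely pairwise disjoint, all of whose members $W$ have $I\restriction W$ saturated or nowhere saturated; by the previous paragraph each $I\restriction W$ is regular (under the hypothesis of the first assertion) or $(\cf(\mu)+1,\lambda)$-regular (under the hypothesis of the second). Given $\la A_i:i<\lambda\ra\subseteq I^+$, for each $W\in\mathcal W$ I would apply the relevant regularity of $I\restriction W$ to the subsequence of those $A_i\cap W$ that are $I$-positive, producing within $W$ a refinement $\la B_i^W\ra$ that is pairwise disjoint, respectively has empty intersections along increasing chains of order type $\cf(\mu)+1$, and then set $B_i=\bigcup_{W\in\mathcal W}B_i^W$. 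Maximality of $\mathcal W$ forces each $A_i$ to meet some $W$ positively, so $B_i\in I^+$; since $\bigcup\mathcal W\in I^*$ and the $W$'s are disjoint, $B_i\subseteq A_i$, and any intersection $\bigcap_{i\in x}B_i$ with $x$ of the relevant order type is contained in $\bigcap_{i\in x}B_i^{W}$ for a single $W$, hence empty. This yields both conclusions.

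The main obstacle I anticipate is organizing this decomposition correctly rather than any single hard estimate: one must check that "$I\restriction B$ is saturated or nowhere saturated" can always be arranged below a positive set so that the chosen maximal antichain among such $B$ is genuinely maximal in $\p(\p_\kappa(\lambda))/I$, and one must verify that the patching step uses only maximality of $\mathcal W$ — so that every $A_i$ is hit positively — and is insensitive to $|\mathcal W|$, which may well exceed $\lambda$. The saturated case itself is essentially immediate from Lemmas \ref{cofcon}, \ref{main} and \ref{upperlim}; its only delicate point is the passage to $\p(\lambda)$ and the identification of $\cf(|z|)$ with $\cf(\mu)$ on a measure-one set, which is exactly where $\kappa$-completeness and $\kappa=\mu^+$ are used.
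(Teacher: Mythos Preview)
Your overall strategy matches the paper's: split into saturated versus nowhere-saturated pieces, handle the former with Lemmas \ref{cofcon}, \ref{main}, \ref{upperlim} and the latter with Lemma \ref{split}, then recombine. The treatment of each individual piece is correct.

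The gap is exactly where you anticipated it, in the reassembly. You assert that a maximal antichain $\mathcal W$ of saturated-or-nowhere-saturated sets can be ``refined to be genuinely pairwise disjoint,'' and your patching $B_i=\bigcup_{W\in\mathcal W}B_i^W$ relies on this disjointness to conclude that any $z\in\bigcap_{i\in x}B_i$ lies in a single $W$. But disjointifying an antichain of size $>\lambda$ is precisely an instance of the disjoint refinement property you are trying to prove; normality only gives it for antichains indexed by $\lambda$. You give no argument bounding $|\mathcal W|$, and indeed acknowledge it may exceed $\lambda$. Without literal disjointness the patching fails, since $z$ may lie in $B_i^{W}$ and $B_j^{W'}$ for distinct $W,W'$.

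The paper avoids a global antichain entirely. It applies the dichotomy termwise to the given sequence: for each $\alpha<\lambda$, take $B_\alpha\subseteq A_\alpha$ saturated if one exists, otherwise $B_\alpha=A_\alpha$ is nowhere saturated. This yields a partition $Y_0\cup Y_1$ of $\lambda$ with $B_\alpha\cap B_\beta\in I$ whenever $\alpha\in Y_0$, $\beta\in Y_1$; since there are only $\lambda$ indices, normality and the argument of Lemma~\ref{split} suffice to make the $Y_0$-part literally disjoint from everything. The key extra observation, which replaces your antichain, is that the single set $C=\nabla_{\alpha\in Y_1}B_\alpha$ is itself saturated: any $\lambda^+$-antichain below $C$ would, by pigeonhole over the $\lambda$-many $B_\alpha$'s, place $\lambda^+$ pairwise incompatible sets below some saturated $B_\alpha$. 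Thus Lemmas~\ref{cofcon}, \ref{main}, \ref{upperlim} are applied \emph{once} to $I\restriction C$, refining $\langle C_\alpha:\alpha\in Y_1\rangle$ inside $C$, rather than piecewise over an uncontrolled family.
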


\begin{proof}
Let $\la A_\alpha : \alpha < \lambda \ra \subseteq I^+$.  We first separate the saturated and non-saturated parts.  We choose an initial refinement by putting $B_\alpha = A_\alpha$ if there is no $B \subseteq A_\alpha$ such that $I \restriction B$ is saturated, and otherwise choose $B_\alpha \subseteq A_\alpha$ such that $I \restriction B_\alpha$ is saturated.  Let $Y_0$ be the ordinals below $\lambda$ falling into the first case, and $Y_1$ those falling into the second.  Note that whenever $\alpha \in Y_0$ and $\beta \in Y_1$, we have $B_\alpha \cap B_\beta \in I$.  As in the proof of Lemma \ref{split}, we may refine to a sequence $\la C_\alpha : \alpha < \lambda \ra$ such that $C_\alpha \cap C_\beta = \emptyset$ whenever at least one of $\alpha,\beta$ is in $Y_0$.  
If we put $C =  \nabla_{\alpha \in Y_1} C_\alpha$, then $I \restriction C$ is saturated, since if $\la D_\alpha : \alpha < \lambda^+ \ra$ were an antichain in $\p(C)/I$, then for some $\beta < \lambda$, there would be $\lambda^+$-many $\alpha$ such that $C_\beta \cap D_\alpha \in I^+$, in contradiction to the fact that $I \restriction C_\beta$ is saturated.

We may assume $\cf(\sup z) = \cf(\mu)$ for all $z \in C$.   Since $I \restriction A$ is not $\lambda$-saturated for any $A \in I^+$, Lemma \ref{main} implies that if $I$ is $(\cf(\mu),\lambda)$-regular, then there is a disjoint refinement of $\la C_\alpha : \alpha \in Y_1 \ra$ into $I$-positive sets $\la D_\alpha : \alpha \in Y_1 \ra$.  Putting this together with $\la C_\alpha : \alpha \in Y_0 \ra$, we have a disjoint refinement of the original sequence into $I$-positive sets.

If $\lambda$ is a regular cardinal, then by Lemma \ref{upperlim}, there is a refinement $\la E_\alpha : \alpha \in Y_1 \ra \subseteq I^+$ of $\la C_\alpha : \alpha \in Y_1 \ra$ such that $\bigcap_{\alpha \in x} E_\alpha = \emptyset$ whenever $\ot(x) > \cf(\mu)$, showing that $I$ is $(\cf(\mu) + 1,\lambda)$-regular. 
\end{proof}

In order to prove Theorem \ref{succ}, we use some results from \cite{t1} which allow a reduction to normal ideals:

\begin{lemma}[Taylor]
\label{taylor}
Let $I$ be a $\kappa$-complete ideal on $\kappa$.
\begin{enumerate}
\item Suppose every sequence $\la A_i : i< \kappa\ra \subseteq I^+$ has a refinement $\la B_i : i< \kappa\ra \subseteq I^+$ such that $I \restriction B_i$ is $(\alpha,\beta,\kappa)$-regular for each $i$.  Then $I$ is $(\alpha,\beta,\kappa)$-regular.
\item If $\kappa= \mu^+$ and $I$ is $\kappa^+$-saturated, then there is $A \in I^+$ and a bijection $f :  \kappa \to \kappa$ such that $\{ f[X] : X \in I \restriction A \}$ is a normal ideal on $\kappa$.
\end{enumerate}
\end{lemma}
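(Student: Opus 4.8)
The plan is to treat the two clauses separately: (1) is a purely combinatorial statement about ideals, while (2) rests on the theory of generic ultrapowers of saturated ideals.

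For (1): given $\la A_i : i < \kappa \ra \subseteq I^+$, first apply the hypothesis once to obtain a refinement $\la B_i : i < \kappa\ra \subseteq I^+$ with each $I \restriction B_i$ being $(\alpha,\beta,\kappa)$-regular. The work is to amalgamate these local ``almost-disjoint'' refinements into a single global refinement of $\la A_i : i<\kappa\ra$. I would try a recursion on $i<\kappa$: having chosen $C_j \subseteq B_j$ for $j < i$, use the $(\alpha,\beta,\kappa)$-regularity of $I \restriction B_i$ applied to a length-$\kappa$ enumeration of the $I$-positive sets among $\{\, B_i \cap \bigcap_{j \in z} C_j : z \subseteq i,\ \ot(z) < \beta \,\}$ to choose $C_i \subseteq B_i$ meeting each of them in a set of size $\leq \alpha$. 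The bookkeeping then guarantees that whenever $x \subseteq \kappa$ has $\ot(x) = \beta$ with greatest element $i$, the set $\bigcap_{j \in x} C_j$ was made small at stage $i$ (and, since any set of limit ordertype $\beta$ contains one of successor ordertype below $\beta$, the limit case is automatic). The delicate point --- and what I expect to be the main obstacle --- is that the family of requirements $\{\, z \subseteq i : \ot(z) < \beta\,\}$ at stage $i$ can have size up to $|i|^{|\beta|}$, which may exceed $\kappa$, so that the length-$\kappa$ instance of $(\alpha,\beta,\kappa)$-regularity no longer suffices; in the situations actually needed for Theorem \ref{succ} one has $\mu^\beta = \mu$ (or $\beta$ finite), hence $|\{\, z\subseteq i : \ot(z)<\beta\,\}| < \kappa$ and the recursion goes through, but the fully general statement requires Taylor's more careful organization of the construction.

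For (2): assume $\kappa = \mu^+$ and $I$ is $\kappa^+$-saturated, force with $\p(\kappa)/I$, and let $j : V \to M$ be the generic ultrapower. Since $I$ is $\kappa^+$-saturated it is precipitous, so $M$ is well-founded, and $\kappa$-completeness gives $\crit(j) = \kappa$. Because $[\id]_G$ is an ordinal of $M$ with $\kappa \leq [\id]_G < j(\kappa)$, we have $\kappa < j(\kappa)$ automatically; as the ordinals of $M$ below $j(\kappa)$ are exactly the classes $[h]_G$ of functions $h : \kappa \to \kappa$ in $V$, there is such an $h$ with $[h]_G = \kappa$ in every generic extension, so by the forcing theorem and a mixing argument there are $A \in I^+$ and $f : \kappa \to \kappa$ in $V$ with $[A] \Vdash [\check f]_{\dot G} = \check\kappa$. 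One checks that $f$ is regressive with $I$-null fibres on $A$ (shrinking $A$ if necessary); after passing to a further subcondition, $f \restriction A$ can be taken one-to-one --- equivalently, one produces an $I$-positive ``transversal'' for the minimal function $f$ --- and this last step is the main technical point here. Then $f$ extends to a bijection of $\kappa$, the pushforward $J = \{\, f[X] : X \in I \restriction A \,\}$ has quotient isomorphic to $\p(A)/(I \restriction A)$ with the same generic ultrapower $j : V \to M$, and under this isomorphism $[\id_\kappa]$ corresponds to $[f]_G = \kappa$; since a normal ($\kappa$-complete) ideal on $\kappa$ is exactly one for which the identity represents $\kappa$ in its generic ultrapower, $J$ is normal, as required. (One can also prove (2) by a direct argument with minimal functions, or via Ulam matrices, avoiding forcing.)
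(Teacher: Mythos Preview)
The paper does not supply a proof of this lemma; it is quoted from Taylor \cite{t1} and used as a black box in deriving Theorem~\ref{succ}. So there is no argument in the paper to compare your sketch against.

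That said, a comment on the sketch itself. Your treatment of (2) via the generic ultrapower is the standard modern route and is correct in outline; Taylor's original argument predates the precipitous-ideal technology and works directly with minimal functions, but the content is the same, and the paper elsewhere (in the discussion before Theorem~\ref{duality}) uses exactly the generic-ultrapower picture you invoke.

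Your plan for (1), however, has a genuine gap beyond the cardinality obstacle you already flag. At stage $i$ you propose to apply the $(\alpha,\beta,\kappa)$-regularity of $I\restriction B_i$ to an enumeration of the sets $B_i\cap\bigcap_{j\in z}C_j$ and thereby ``choose $C_i\subseteq B_i$ meeting each of them in a set of size $\le\alpha$''. But $(\alpha,\beta,\kappa)$-regularity applied to a sequence $\langle X_\xi:\xi<\kappa\rangle$ yields a \emph{refinement} $\langle Y_\xi\rangle$ whose $\beta$-fold intersections among the $Y_\xi$ are small; it does not hand you a single set having intersection of size $\le\alpha$ with each $X_\xi$ individually. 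What you are implicitly asking for at stage $i$ is $(\alpha,2,\kappa)$-regularity of $I\restriction B_i$, a strictly stronger hypothesis. So the recursion as described does not go through even under $\mu^\beta=\mu$; you will need to consult \cite{t1} for the actual organization of the argument.
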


Let $I$ be a $\kappa$-complete ideal on $\kappa = \mu^+$.  First let us show part (\ref{zfcreg}) of Theorem \ref{succ}.  By Lemmas \ref{split} and \ref{nonsat}, $I$ is $(2,\delta)$ regular for $\delta < \kappa$.
 For the other regularity properties, let $\la A_\alpha : \alpha < \kappa \ra \subseteq I^+$.  Let $B_\alpha \subseteq A_\alpha$ be an $I$-positive set such that $I \restriction B_\alpha$ is $\kappa^+$-saturated if there is such a $B_\alpha$.  In such a case, part (2) of Lemma \ref{taylor} implies that we can find an $I$-positive $C_\alpha \subseteq B_\alpha$ such that $I \restriction C_\alpha$ is isomorphic to a normal ideal.    
By Lemmas \ref{upperlim} and \ref{cofcon}, $I \restriction C_\alpha$ is $(\cf(\mu)+1,\kappa)$-regular and $(1,\cf(\mu),\kappa)$-regular whenever $C_\alpha$ is defined.  If $C_\alpha$ is undefined, then $I \restriction A_\alpha$ is regular by Lemma \ref{split}.  Part (1) of Lemma \ref{taylor} then gives that $I$ is $(\cf(\mu)+1,\kappa)$-regular and $(1,\cf(\mu),\kappa)$-regular.  

Now let us show part (\ref{regdich}).  Let $\la A_\alpha : \alpha < \kappa \ra \subseteq I^+$, and choose sets $C_\alpha$ exactly as above.  If $I$ is $(\cf(\mu),\kappa)$-regular, then so is each $I \restriction C_\alpha$ when $C_\alpha$ is defined, and thus $I \restriction C_\alpha$ is regular by Theorem \ref{succgen}.  Again by Lemma \ref{split} and part (1) of Lemma \ref{taylor}, $I$ is regular in this case.  


To show part (\ref{gchreg}) of Theorem \ref{succ}, we introduce an extension of Taylor's three-variable notion of regularity.  Let us say an ideal $I$ is \emph{$(I,\alpha,\beta)$-regular} if every sequence $\la A_i : i < \beta \ra \subseteq I^+$ has a refinement $\la B_i : i < \beta \ra \subseteq I^+$ such that $\bigcap_{i \in x} B_i \in I$ whenever $\ot(x) = \alpha$.  If $I$ is a $\kappa$-complete ideal on $\kappa$, then $(I,\beta,\kappa)$-regularity is a weakening of $(\alpha,\beta,\kappa)$-regularity for every $\alpha < \kappa$.

\begin{lemma}
\label{measreg}
Suppose $\kappa=\mu^+$ and $I$ is a $\kappa$-complete ideal on $\kappa$.  If $I$ is $(I,\xi,\kappa)$-regular, where $\mu^\xi = \mu$, then $I$ is regular.
\end{lemma}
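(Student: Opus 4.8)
The goal is to show that $(I,\xi,\kappa)$-regularity (with $\mu^\xi=\mu$) of a $\kappa$-complete ideal $I$ on $\kappa=\mu^+$ forces $I$ to be regular. Since $(I,\xi,\kappa)$-regularity is strictly weaker than $(\alpha,\beta,\kappa)$-regularity for all $\alpha<\kappa$, this immediately yields the ``furthermore'' clause of Theorem \ref{succ}: if $I$ is $(\alpha,\beta,\kappa)$-regular with $\mu^\beta=\mu$, then a fortiori it is $(I,\beta,\kappa)$-regular, hence regular. So the real content is Lemma \ref{measreg}. I would follow the same architecture as the proof of Theorem \ref{succ} just above: reduce to normal ideals via Lemma \ref{taylor}, peel off the non-$\kappa^+$-saturated part using Lemma \ref{split} (which already gives regularity there by Lemma \ref{nonsat}), and on each saturated piece $C_\alpha$ invoke Lemma \ref{taylor}(2) to pass to an isomorphic normal ideal on $\kappa$. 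Then part (1) of Lemma \ref{taylor} reduces everything to proving: a $\kappa$-complete \emph{normal} ideal $J$ on $\kappa$ that is $(J,\xi,\kappa)$-regular (with $\mu^\xi=\mu$) is regular.

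**The normal case.** For a normal ideal $J$ on $\kappa$, Lemma \ref{cofcon} (applied to $\kappa=\p_\kappa(\kappa)$, identifying $\alpha$ with $\alpha$) gives that $\{\alpha<\kappa:\cf(\alpha)=\cf(\mu)\}\in J^*$ when $J$ is saturated, so we may assume all relevant $z$ (i.e.\ ordinals $\alpha$) have $\cf(\alpha)=\cf(\mu)$; in particular $\{\alpha:\cf(\sup\alpha)\geq\mu\}$ is not in general in $J^*$ — wait, for normal ideals on $\kappa$ the object $z$ is an ordinal and $\sup z = z$, so $\cf(\sup z)=\cf(z)\leq\mu<\mu=\ldots$; one must be slightly careful here, but the point is that Lemma \ref{main} is stated for ideals on $\p(\lambda)$ and applies with $\lambda=\kappa$, $\mu$ replaced by any cardinal $\leq$ the relevant cofinalities. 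So the key is: given $(J,\xi,\kappa)$-regularity with $\mu^\xi=\mu$, I want to deduce $(\mu,\kappa)$-regularity (or just $(2,\kappa)$-regularity directly via Lemma \ref{main}). Starting from $\la A_\alpha:\alpha<\kappa\ra\subseteq J^+$ with $\alpha\in z$ forced for $z\in A_\alpha$, apply $(J,\xi,\kappa)$-regularity to get a refinement $\la B_\alpha\ra$ with $|\bigcap_{i\in x}B_i|<\kappa$ — actually $\bigcap_{i\in x}B_i\in J$ — whenever $\ot(x)\geq\xi$. The set $s(z)=\{\alpha:z\in B_\alpha\}$ then has ordertype $<\xi$ for $J$-almost all $z$: if not, by normality/Fodor one would build a sequence $x$ of ordertype $\xi$ with $\bigcap_{i\in x}B_i\in J^+$, a contradiction. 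Hence $|s(z)|\leq|\xi|$ for $J$-a.e.\ $z$, so $|s(z)|<\mu^+=\kappa$, i.e.\ $|s(z)|\leq\mu$; combined with $s(z)\subseteq z$ and $|z|<\kappa$ this puts us in a position to run the argument of Lemma \ref{main} with parameter $\mu$... but that needs $|s(z)|<\mu$, not $\leq\mu$.

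**The main obstacle.** The gap above is exactly where $\mu^\xi=\mu$ must be used, and it is the step I expect to be the crux. Having $|s(z)|\leq|\xi|\leq\mu$ is not quite enough to feed Lemma \ref{main} directly; one needs either (a) to shrink further so $|s(z)|<\mu$, or more likely (b) to run a self-contained argument: fix for each $z$ a function $f$ as in Lemma \ref{main} coding $s(z)$ below some point of $z$, using normality to get $J$-positive $C_\alpha\subseteq B_\alpha$ on which $f$ is constant with value $g(\alpha)>\alpha$; then for each $\alpha$ take a pairwise disjoint refinement $\la D^\alpha_\beta:\beta<\alpha\ra$ of $\la C_\beta:\beta<\alpha\ra$ (Lemma \ref{split} applies since $J\restriction C$ is not $\delta^+$-saturated for $\delta<\kappa$ by Lemma \ref{nonsat}) and set $E_\alpha=D^\alpha_{g(\alpha)}$, which works verbatim as in Lemma \ref{main} — the disjointness only uses $g(\alpha)>\alpha$ and that $f$ is constant on each $E_\alpha$, \emph{not} the bound on $|s(z)|$. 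So perhaps the cardinal arithmetic $\mu^\xi=\mu$ is needed not in the Lemma \ref{main}-style step but earlier, to \emph{ensure} one can code $s(z)$ below a single point of $z$ — i.e.\ to bound $\ot(s(z))$ rather than just $|s(z)|$, or to carry out the refinement $\la B_\alpha\ra$ in the first place by a counting argument enumerating the $\leq\mu^\xi=\mu$-many possible ``types'' of intersections. I would locate the use of $\mu^\xi=\mu$ precisely at the point of showing $\ot(s(z))<\xi$ is achievable simultaneously and that the resulting $f$ can be taken regressive, then finish with the Lemma \ref{main} disjointification. The delicate bookkeeping of ordertypes versus cardinalities of the $s(z)$'s, and pinning down exactly which refinement step consumes the hypothesis $\mu^\xi=\mu$, is the part that requires genuine care; everything else is the now-familiar saturated/non-saturated split plus Lemma \ref{taylor}.
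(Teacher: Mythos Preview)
Your proposal contains a genuine gap: you never actually identify where the hypothesis $\mu^\xi=\mu$ is used, and the machinery you are assembling (reduction to normal ideals, Lemma~\ref{main}-style coding of $s(z)$ by a regressive function) is far heavier than what is needed and does not obviously close. As you yourself notice, the bound you extract on $|s(z)|$ is only $\leq\mu$, not $<\mu$, so Lemma~\ref{main} does not apply directly; your suggestion that ``the disjointness only uses $g(\alpha)>\alpha$'' overlooks that the very existence of a regressive $f$ with $s(z)\subseteq f(z)$ is what requires $|s(z)|<\mu$ in the first place.

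The paper's argument is a direct one-line construction that avoids normality entirely. Given the refinement $\langle B_\alpha\rangle$ witnessing $(I,\xi,\kappa)$-regularity, simply set
\[
C_\alpha \;=\; B_\alpha \setminus \bigcup_{x\in[\alpha]^\xi}\ \bigcap_{\beta\in x} B_\beta.
\]
This is where $\mu^\xi=\mu$ enters: since $|\alpha|\leq\mu$, we have $|[\alpha]^\xi|\leq\mu^\xi=\mu<\kappa$, so the set subtracted is a union of fewer than $\kappa$ many sets in $I$, hence in $I$ by $\kappa$-completeness, and $C_\alpha\in I^+$. Now for any $x\subseteq\kappa$ of ordertype $\xi+1$ with $\alpha=\max(x)$, every point of $C_\alpha$ has already been removed from $\bigcap_{\beta\in x\cap\alpha}B_\beta\supseteq\bigcap_{\beta\in x\cap\alpha}C_\beta$, so $\bigcap_{\beta\in x}C_\beta=\emptyset$. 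Thus $I$ is $(\xi+1,\kappa)$-regular. Since $\mu^\xi=\mu$ forces $\xi<\cf(\mu)$, this gives $(\cf(\mu),\kappa)$-regularity, and the already-proved part of Theorem~\ref{succ} finishes. No reduction to normal ideals, no saturated/non-saturated split, no Lemma~\ref{main} is needed for this lemma.
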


\begin{proof}
Let $\la A_\alpha : \alpha < \kappa \ra \subseteq I^+$, and let $\la B_\alpha : \alpha < \kappa \ra \subseteq I^+$ be a refinement such that $B_\alpha \subseteq \hat \alpha$ for all $\alpha$, and $\bigcap_{\alpha \in x} B_\alpha \in I$ whenever $\ot(x) \geq \xi$.  For every $\alpha < \kappa$ we can define an $I$-positive $C_\alpha \subseteq B_\alpha$ by 
$$C_\alpha = B_\alpha \setminus \bigcup_{x \in [\alpha]^\xi} \bigcap_{\beta\in x} B_\beta.$$
If $x$ is a subset of $\kappa$ of ordertype $\xi + 1$, then let $\alpha = \max(x)$.  If $\beta \in C_\alpha$, then $\beta \notin \bigcap_{\gamma \in x\cap\alpha} C_\gamma$.  This shows that $I$ is $(\xi+1,\kappa)$-regular.  Since $\mu^\xi = \mu$ implies $\xi<\cf(\mu)$, $I$ is regular by part (\ref{regdich}) of Theorem \ref{succ}.
\end{proof}

\section{Consistency results}
This section is devoted to a proof of Theorem \ref{dense}.  If $V \subseteq W$ are models of set theory and $I \in V$ is an ideal, then in $W$ we can generate an ideal $\bar I$ from $I$ by taking all sets which are covered by a set from $I$.  Let us first show the preservation of nonregular ideals by forcings with a strong enough chain condition, as a consequence of Theorem \ref{succgen}.

\begin{lemma}
\label{preservenonreg}
Suppose $\kappa = \mu^+$, $\lambda \geq \kappa$, and $I$ is a nonregular, $\kappa$-complete, normal ideal on $Z \subseteq \p_\kappa(\lambda)$.  If $\mathbb P$ is $\cf(\mu)$-c.c., then in $V^{\mathbb P}$, the ideal $\bar I$ generated by $I$ is nonregular.
\end{lemma}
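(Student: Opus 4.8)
The plan is to argue by contradiction via Theorem~\ref{succgen}. Suppose $\bar I$ is regular --- that is, $(2,\lambda)$-regular --- in $V^{\mathbb P}$; I will show that this forces $I$ itself to be $(\cf(\mu),\lambda)$-regular in $V$, so that $I$ is regular in $V$ by Theorem~\ref{succgen}, contradicting the hypothesis that $I$ is nonregular. (If $Z \subsetneq \p_\kappa(\lambda)$, replace $I$ at the outset by its canonical extension to a normal ideal on $\p_\kappa(\lambda)$, which changes none of the regularity properties in play, so that Theorem~\ref{succgen} applies verbatim.)

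To verify $(\cf(\mu),\lambda)$-regularity of $I$ in $V$, fix an arbitrary sequence $\la A_\alpha : \alpha < \lambda\ra \subseteq I^+$ in $V$. Each $A_\alpha$ is $\bar I$-positive in $V^{\mathbb P}$, since any covering of $A_\alpha$ by a member of $I$ would already witness $A_\alpha \in I$ in $V$. So, using the assumed regularity of $\bar I$ together with the maximal principle, fix a $\mathbb P$-name $\la \dot B_\alpha : \alpha < \lambda\ra$ that is forced to be a pairwise disjoint refinement of $\la A_\alpha : \alpha < \lambda\ra$ into $\bar I$-positive sets. The key move is to reflect this refinement back to $V$ by setting, in $V$,
\[
B_\alpha \;=\; \{\, z \in A_\alpha : (\exists q \in \mathbb P)\, q \Vdash z \in \dot B_\alpha \,\}.
\]

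Two points then need checking. First, each $B_\alpha$ is $I$-positive: if not, then $B_\alpha \in I \subseteq \bar I$, and since it is forced that $\dot B_\alpha \subseteq A_\alpha$ and, by the definition of $B_\alpha$, that $\dot B_\alpha \cap A_\alpha \subseteq B_\alpha$, it would be forced that $\dot B_\alpha \subseteq B_\alpha$ --- contradicting that $\dot B_\alpha$ is forced to be $\bar I$-positive. Second --- and this is the only use of the chain condition --- for every $z$ the set $S_z := \{ \alpha : z \in B_\alpha \}$ has cardinality $<\cf(\mu)$: choosing for each $\alpha \in S_z$ a condition $q_\alpha$ forcing $z \in \dot B_\alpha$, the conditions $q_\alpha$ for distinct indices are pairwise incompatible (a common extension would force $z$ into $\dot B_\alpha \cap \dot B_\beta = \emptyset$), so $\{q_\alpha : \alpha \in S_z\}$ is an antichain of size $|S_z|$, and $\cf(\mu)$-c.c.\ gives $|S_z| < \cf(\mu)$. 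Since a subset of $\lambda$ of ordertype $\cf(\mu)$ has cardinality $\cf(\mu)$, no $z$ can lie in $\bigcap_{\alpha \in x} B_\alpha$ when $\ot(x) = \cf(\mu)$ (otherwise $x \subseteq S_z$), so that intersection is empty. Hence $\la B_\alpha : \alpha < \lambda\ra$ witnesses $(\cf(\mu),\lambda)$-regularity for our sequence, and since the sequence was arbitrary, $I$ is $(\cf(\mu),\lambda)$-regular in $V$, completing the contradiction.

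I do not anticipate a serious obstacle: this is a routine ``reflect a name down to the ground model'' argument once the right target property is identified. The one thing to get right is that the hypothesis is $\cf(\mu)$-c.c.\ and not merely $\kappa$-c.c.: this is exactly what pins the overlap number of the reflected family $\la B_\alpha\ra$ to $\cf(\mu)$, so that the ``$(\cf(\mu),\lambda)$-regular $\Rightarrow$ regular'' implication of Theorem~\ref{succgen} becomes available --- with only $\kappa$-c.c.\ one would get overlaps of size $<\kappa$, which does not suffice. It is also worth noting that the argument never requires $\bar I$ to be normal; the normality of $I$ is used only inside the appeal to Theorem~\ref{succgen}.
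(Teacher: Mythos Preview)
Your argument is correct and is essentially the paper's own proof: reflect a forced disjoint refinement of a ground-model sequence to $V$ via $B_\alpha = \{z \in A_\alpha : (\exists q)\, q \Vdash z \in \dot B_\alpha\}$, use the $\cf(\mu)$-c.c.\ to bound the overlap, and invoke Theorem~\ref{succgen}. The only cosmetic difference is that the paper fixes a condition $p$ forcing ``$\bar I$ is regular'' and quantifies over $q \leq p$ throughout, whereas you tacitly assume this is forced by $1$; the adjustment is routine.
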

\begin{proof}
Let us show the contrapositive, that if $\bar I$ is regular in a $\mathbb P$-generic extension, then $I$ is regular in $V$.
Let $\la A_\alpha : \alpha <\lambda \ra \subseteq I^+$ be in $V$.  If $p \Vdash \bar I$ is regular, then there is a $\mathbb P$-name for a refinement $\la \dot B_\alpha : \alpha <\lambda \ra$ such that each $z \in Z$ is forced by $p$ to be in 
at most one $B_\alpha$.
In $V$, for each $\alpha$ let $C_\alpha = \{ z \in A_\alpha : (\exists q \leq p)q \Vdash z \in \dot B_\alpha \}$.  Since $p \Vdash \dot B_\alpha \subseteq \check C_\alpha$, each $C_\alpha$ is $I$-positive.  By the chain condition, for each $z$, the set $s(z) := \{ \alpha : (\exists q \leq p)q \Vdash z \in \dot B_\alpha \} =  \{ \alpha : z \in C_\alpha \}$ has size $<\cf(\mu)$.  This shows that $I$ is $(\cf(\mu),\lambda)$-regular in $V$, and thus regular by Theorem \ref{succgen}.
\end{proof}

If $I$ is a $\kappa$-complete normal ideal and $\mathbb P$ is a $\kappa$-c.c.\ forcing, then it is easy to show that the ideal generated by $I$ is also $\kappa$-complete and normal in $V^{\mathbb P}$.  If $I$ is saturated, then Foreman's Duality Theorem \cite{foremanduality} allows us to say much more.  This is connected to the forcing properties of the quotient algebra and generic elementary embeddings.

The following facts can be found in\cite{foremanhandbook}.
If $I$ is an ideal on $Z$ and $G \subseteq \p(Z)/I$ is generic, then in $V[G]$, we can form the ultrapower embedding $j : V \to V^Z/G$.  If $Z \subseteq \p(\lambda)$ and $I$ is normal, then the pointwise image of $\lambda$ under $j$ is represented in the ultrapower by the identity function on $Z$, i.e.\ $[\id]_G = j[\lambda]$.  If $I$ is $\kappa$-complete, $\kappa = \mu^+$, and $Z \subseteq \p_\kappa(\lambda)$, then $\kappa$ is the critical point of $j$, and $V^Z/G \models |j[\lambda]| < j(\kappa)$.  Consequently, $V[G] \models |\lambda| = |\mu|$.  This implies that there is no condition $A \in I^+$ such that $I \restriction A$ is $\lambda$-saturated.
Thus in this context, $I$ being saturated is the same as $\p(Z)/I$ having the best possible chain condition.  If this occurs, then $I$ is \emph{precipitous}, meaning that whenever $G \subseteq \p(Z)/I$ is generic, $V^Z/G$ is well-founded and thus isomorphic to a transitive class $M \subseteq V[G]$.  



\begin{theorem}[Foreman \cite{foremanduality}]
\label{duality}
Suppose $I$ is a $\kappa$-complete precipitous ideal on $Z$, and $\mathbb P$ is a $\kappa$-c.c.\ forcing.  In $V^{\mathbb P}$, let $\bar I$ denote the ideal generated by $I$, and let $j$ denote a generic ultrapower embedding obtained from forcing with $\p(Z)/I$.  Then there is an isomorphism
$$\iota : \mathcal B(\mathbb P * \dot{\p(Z)} /\bar I) \cong \mathcal B(\p(Z)/I * \dot{j(\mathbb P)})$$
given by $\iota(p,\dot A) = || [\id] \in j(\dot A)|| \wedge (1,\dot{j(p)})$.
\end{theorem}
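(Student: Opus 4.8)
The plan is to prove this (Foreman's Duality Theorem) by the standard ``generic object'' method: to see that $\iota$ extends to an isomorphism of the two Boolean completions, it is enough to check that $\iota$ is order- and incompatibility-preserving with dense range, and the cleanest way to organize that is to show that forcing with either side canonically and interdefinably produces a generic for the other, matching the displayed formula on a dense set. The technical engine will be the interplay of the $\kappa$-c.c.\ of $\mathbb P$ with the $\kappa$-completeness of $I$: if $\dot B$ is a $\mathbb P$-name and $p \Vdash \dot B \in \bar I$, then covering each of the fewer than $\kappa$ pieces of a maximal antichain below $p$ by a member of $I$ and taking the union (a member of $I$, by $\kappa$-completeness) yields a single $C \in I \cap V$ with $p \Vdash \dot B \subseteq \check C$. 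This ``local'' description of $\bar I$ is what lets the quotient $(\p(Z)/\bar I)^{V^{\mathbb P}}$ be analyzed in terms of $(\p(Z)/I)^V$.

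Concretely, I would start with a generic $G * K$ for $\p(Z)/I * \dot{j(\mathbb P)}$, so that $j : V \to M = \mathrm{Ult}(V,G)$ is the generic ultrapower ($M$ transitive by precipitousness, $\crit(j) = \kappa$, and $B \in G \iff [\id]_G \in j(B)$ for $B \in V$) and $K$ is $j(\mathbb P)$-generic over $M$. Since $\mathbb P$ is $\kappa$-c.c., every maximal antichain $\mathcal A$ of $\mathbb P$ in $V$ has size $<\kappa$, so $j(\mathcal A) = j[\mathcal A]$ remains a maximal antichain of $j(\mathbb P)$ in $M$; hence $H := \{ p \in \mathbb P : j(p) \in K \}$ is $\mathbb P$-generic over $V$ and $j$ lifts to $\hat j : V[H] \to M[K]$ with $\hat j \restriction V = j$ and $\hat j(H) = K$. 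Put $\bar G := \{ [B]_{\bar I} : B \in \p(Z)^{V[H]},\ [\id]_G \in \hat j([B]_{\bar I}) \}$, an ultrafilter on $(\p(Z)/\bar I)^{V[H]}$. Then I would verify: (a) the map $[f]_{\bar G} \mapsto \hat j(f)([\id]_G)$ is a well-defined $\in$-isomorphism of $\mathrm{Ult}(V[H],\bar G)$ onto the transitive class $M[K]$, so the generic ultrapower of $V[H]$ by $\bar G$ is (well-founded and) exactly $M[K]$ with $\hat j$ its embedding; (b) $\bar G$ is generic over $V[H]$, so $H * \bar G$ is a generic for $\mathbb P * \dot{\p(Z)/\bar I}$; (c) this passage is inverted by the assignments $G := \{[A]_I : A \in V,\ [A]_{\bar I} \in \bar G\}$, $j :=$ the induced generic ultrapower of $V$, $K := \hat j(H)$; and (d) tracing the definition of $\bar G$, a condition $(p,\dot A)$ lies in $H * \bar G$ exactly when $j(p) \in K$ and $[\id]_G \in j(\dot A)^K$ — i.e.\ on precisely the conditions below $\| [\id] \in j(\dot A) \| \wedge (1, \dot{j(p)})$ — so $\iota$ is the isomorphism.

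The main obstacle is (a): that $M[K]$ ``captures'' every $f : Z \to V[H]$ lying in $V[H]$. Well-definedness, injectivity, and $\in$-preservation of the map follow from elementarity of $\hat j$ (e.g.\ $\hat j$ sends $\{z : f(z)=f'(z)\}$ to $\{w : \hat j(f)(w)=\hat j(f')(w)\}$, so the equivalence $[\id]_G \in \hat j(\{z : f(z)=f'(z)\}) \iff \hat j(f)([\id]_G)=\hat j(f')([\id]_G)$ is automatic); the range lies in $M[K]$ since $\hat j$ maps $V[H]$ into $M[K]$ and $[\id]_G \in M$. For surjectivity one uses that $M = \mathrm{Ult}(V,G)$, so every element of $M[K]$ has the form $\big(j(\dot g)([\id]_G)\big)^K$ for some $g : Z \to V^{\mathbb P}$ in $V$, and the $\kappa$-c.c.\ of $\mathbb P$ lets one regard $z \mapsto \dot g(z)^H$ as a function in $V[H]$ representing that element. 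The genericity statements in (b) and (c) — that $\bar G$, respectively the derived $G$ and $H$, meet every maximal antichain of the relevant model — then run through the local description of $\bar I$ above together with the standard theory of generic ultrapowers, and require some care since $\p(Z)/\bar I$ need not admit disjoint refinements of large antichains; once these are in hand, the verification of the explicit formula for $\iota$ in (d), and hence that $\iota$ has dense range and extends to the stated isomorphism, is routine bookkeeping.
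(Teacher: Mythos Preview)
The paper does not give its own proof of this theorem; it is stated with attribution to Foreman and cited from \cite{foremanduality}, then used as a black box in the arguments that follow. There is therefore nothing in this paper to compare your attempt against. For what it is worth, your outline is the standard route to Foreman's Duality Theorem and is essentially correct in spirit, though the genericity check in (b) is where the real work lies and your sketch leaves it at the level of a plan rather than an argument.
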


The next proposition shows the relevance of the cardinal arithmetic assumption in Lemma \ref{measreg}.  For example, we can produce a model in which CH fails and there is a nonregular ideal $I$ on $\omega_2$ which is $(I,\omega,\omega_2)$-regular.

\begin{proposition}Suppose $\kappa=\mu^+$, $\nu \leq \mu$ is such that $\nu^{<\nu} = \nu$, and $I$ is a saturated, nonregular, $\kappa$-complete ideal on $\kappa$.  If $G \subseteq \add(\nu,\kappa)$ is generic, then in $V[G]$, $\bar I$ is $(\bar I,\nu,\kappa)$-regular.
\end{proposition}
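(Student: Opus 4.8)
The plan is to apply Foreman's Duality Theorem, reduce $(\bar I,\nu,\kappa)$-regularity to a statement about the generic ultrapower, and then win by exploiting the single new Cohen subset of $\nu$ that appears at the ``generic point'' of the quotient forcing.

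First I would check the hypotheses. Writing $\mathbb P=\add(\nu,\kappa)$, the assumption $\nu^{<\nu}=\nu$ makes $\mathbb P$ both $<\nu$-closed and $\nu^+$-c.c., and $|\mathbb P|=\kappa$; since $\nu^+\le\mu^+=\kappa$, $\mathbb P$ is in particular $\kappa$-c.c. A $\kappa^+$-saturated $\kappa$-complete ideal on $\kappa$ is precipitous, so Theorem~\ref{duality} applies to $I$ and $\mathbb P$, giving $\mathcal B(\mathbb P*\dot{\p(\kappa)}/\bar I)\cong\mathcal B(\p(\kappa)/I*\dot{j(\mathbb P)})$, where (since $\crit(j)=\kappa>\nu$) $j(\mathbb P)=\add(\nu,j(\kappa))^M$, with $\mathbb P$ sitting inside it as the coordinates below $\kappa$. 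I would then record the resulting picture: forcing with $\p(\kappa)/I*\dot{j(\mathbb P)}$ over $V$ yields a generic embedding $j\colon V\to M$, the generic ordinal $\delta=[\id]\in[\kappa,j(\kappa))$, and an $\add(\nu,j(\kappa))^M$-generic $G_*$ over $M$; its restriction $G:=G_*\restriction(\kappa\times\nu)$ is $\mathbb P$-generic over $V$, and there is a $\p(\kappa)/\bar I$-generic $\bar G$ over $V[G]$ with $V[G][\bar G]=M[G_*]$ whose generic ultrapower $\bar\jmath\colon V[G]\to M[G_*]$ extends $j$, satisfies $\bar\jmath(G)=G_*$, and has $[\id]_{\bar G}=\delta$.

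Next comes the reduction. It suffices to show: in $V[G]$, every $\la A_\alpha:\alpha<\kappa\ra\subseteq\bar I^+$ admits a refinement $\la B_\alpha:\alpha<\kappa\ra\subseteq\bar I^+$ with $1\Vdash_{\p(\kappa)/\bar I}$ ``$\{\alpha<\kappa:B_\alpha\in\dot{\bar G}\}$ has order type $<\nu$''. Indeed, if $x\in V[G]$ has $\ot(x)\ge\nu$ and $\bigcap_{\alpha\in x}B_\alpha\in\bar I^+$, then forcing that set into $\bar G$ puts every $B_\alpha$ ($\alpha\in x$) into $\bar G$, contradicting the order-type bound; hence $\bigcap_{\alpha\in x}B_\alpha\in\bar I$, which is exactly $(\bar I,\nu,\kappa)$-regularity. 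Now $\{\alpha:B_\alpha\in\bar G\}=\{\alpha<\kappa:\delta\in\bar\jmath(B_\alpha)\}$, so, fixing $\mathbb P$-names $\dot A_\alpha$ and taking $\dot B_\alpha\subseteq\dot A_\alpha$ to be $\mathbb P$-names, the Duality isomorphism turns the target into: $1\Vdash_{\p(\kappa)/I*j(\mathbb P)}$ ``$\{\alpha<\kappa:\delta\in j(\dot B_\alpha)^{\dot G_*}\}$ has order type $<\nu$''. Thus the task is, given $\la\dot A_\alpha\ra$ forced to be a sequence of $\bar I$-positive sets, to produce $\la\dot B_\alpha\ra$ forced to refine it into $\bar I$-positive sets and satisfying this last forcing statement.

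For the construction I would work in $V[G]$, write $c_\alpha$ for the $\alpha$-th Cohen subset of $\nu$, and put $B_\alpha=A_\alpha\cap E_\alpha$ where $E_\alpha\subseteq\kappa$ is read off from $G$. Two things must hold: (a) $B_\alpha\in\bar I^+$, which should come from a density argument — no condition can force $E_\alpha\cap W$ into $I$ for a ground-model $I$-positive $W\subseteq A_\alpha$, so $B_\alpha$ is covered by no $I$-set; and (b) in the generic ultrapower, ``$\delta\in\bar\jmath(B_\alpha)$'' must entail a constraint on the single new Cohen subset $h$ of $\nu$ added by $G_*$ at coordinate $\delta$, a constraint which, since $h$ is $\add(\nu,1)$-generic over $M$, can hold for only $<\nu$ of the indices $\alpha<\kappa$. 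The main obstacle is that the cut $A_\alpha\mapsto B_\alpha$ cannot merely be ``intersect with a basic clopen set determined by $\nu$ bits of $G$'': by pigeonhole ($\kappa>\nu=2^{<\nu}$) such a cut is satisfied cofinally often in the ultrapower, defeating (b). The cut must instead depend on $\alpha$ through the whole family $\la A_{\alpha'}:\alpha'<\kappa\ra$, and verifying (a) and (b) simultaneously — in particular that the relevant intersections really do fall into $\bar I$ — is the heart of the argument, and is where $\nu^{<\nu}=\nu$ is essential, its failure being precisely what Lemma~\ref{measreg} exploits to force full regularity.
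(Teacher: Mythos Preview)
Your setup is correct: Foreman's Duality applies, and the reduction of $(\bar I,\nu,\kappa)$-regularity to the forcing statement ``$\{\alpha:[B_\alpha]\in\bar G\}$ has ordertype $<\nu$'' is valid. But the proposal stops precisely where the argument begins, and the direction you gesture toward is not the one that works.

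You propose to take $B_\alpha=A_\alpha\cap E_\alpha$ with $E_\alpha$ ``read off from $G$'', and to arrange that membership of $\delta$ in $\bar\jmath(B_\alpha)$ imposes a constraint on the \emph{single} Cohen function $h=G_*(\delta,\cdot)$. You then correctly observe the pigeonhole obstruction: a single coordinate carries only $2^{<\nu}=\nu$ many possible constraints, so $\kappa$ many indices cannot be separated this way. Your proposed fix---make the cut depend on the whole family $\la A_{\alpha'}\ra$---is left entirely unspecified, and there is no indication of what such a dependence would look like or why it would yield both positivity and the ordertype bound. As written, the proof has a genuine gap at the decisive step.

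The paper's idea is much simpler and bypasses the obstacle rather than confronting it. Duality gives, in $V[G]$, an isomorphism $\sigma:\p(\kappa)/\bar I\cong\mathcal B\bigl(\p(\kappa)^V/I\times\add(\nu,\kappa^+)\bigr)$. Given $\la A_\alpha:\alpha<\kappa\ra\subseteq\bar I^+$, pick any $(B_\alpha,p_\alpha)\le\sigma([A_\alpha])$, choose $\beta<\kappa^+$ with $\dom p_\alpha\subseteq\beta\times\nu$ for all $\alpha$, set $q_\alpha=\{((\beta+\alpha,0),0)\}$, and let $C_\alpha\le\sigma^{-1}(B_\alpha,p_\alpha\wedge q_\alpha)$. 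The point is that the $q_\alpha$ live on \emph{pairwise distinct} coordinates of the Cohen factor, so any $\nu$ of them have no common lower bound in $\add(\nu,\kappa^+)$ (conditions have domain of size $<\nu$); hence $\bigwedge_{\alpha\in x}[C_\alpha]=0$ whenever $|x|\ge\nu$, i.e.\ $\bigcap_{\alpha\in x}C_\alpha\in\bar I$. There is no need to focus on the single coordinate $\delta$, and no need to restrict the refinement to the form $A_\alpha\cap E_\alpha$: one simply exploits that the quotient has a large product of Cohen forcings as a factor and spreads the refinements across $\kappa$ many of its coordinates.
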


\begin{proof}

Since $\nu^{<\nu} = \nu$, $\add(\nu,\kappa)$ is $\nu^+$-c.c.  By Theorem \ref{duality}, in $V[G]$, there is an isomorphism $\sigma : \p(\kappa)/\bar I \cong \mathcal B(\p(\kappa)^V/I \times \add(\nu,\kappa^+))$.  If $\la A_\alpha : \alpha < \kappa \ra \subseteq\bar I^+$, choose for each $\alpha$ some $(B_\alpha,p_\alpha) \leq \sigma(A_\alpha)$. Let $\beta<\kappa^+$ be such that $\dom p_\alpha \subseteq \beta \times \nu$ for all $\alpha$.  Let $q_\alpha = \{ ((\beta+\alpha, 0),0) \}$ for $\alpha<\kappa$, and choose $C_\alpha \leq \sigma^{-1}(B_\alpha,p_\alpha\wedge q_\alpha)$. The intersection of any $\nu$-many $C_\alpha$ is in $\bar I$, since there is no lower bound to $\nu$-many $q_\alpha$.
\end{proof}

\begin{lemma}
\label{rest}
Suppose $I$ is a normal ideal on $Z \subseteq \p(X)$.  Then $I$ is $|X|^+$-saturated if and only if every normal $J \supseteq I$ is equal to $I \restriction A$ for some $A \subseteq Z$.
\end{lemma}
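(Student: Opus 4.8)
The plan is to route both directions through the quotient Boolean algebra $\mathcal{B} = \p(Z)/I$, using the standard correspondence between normal ideals $J \supseteq I$ on $Z$ and ideals of $\mathcal{B}$ closed under the diagonal‑union operation (sending $J$ to $\{[B]_I : B \in J\}$), under which $J = I \restriction A$ corresponds to the principal ideal generated by $[Z \setminus A]_I$. Note that $|X|^+$‑completeness of $I$ will not be needed anywhere.

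For the forward implication, assume $I$ is $|X|^+$‑saturated, so that $\mathcal{B}$ is a complete Boolean algebra in which suprema are computed by diagonal unions. Fix a normal ideal $J \supseteq I$ and set $\mathcal{J} = \{[B]_I : B \in J\}$, which is readily checked to be an ideal of $\mathcal{B}$. The crucial step is to show $\mathcal{J}$ is closed under \emph{arbitrary} suprema of $\mathcal{B}$. Given $\mathcal{S} \subseteq \mathcal{J}$: by the $|X|^+$‑chain condition, a maximal antichain below $\bigvee \mathcal{S}$ each of whose members lies below some element of $\mathcal{S}$ has size at most $|X|$, and choosing one such element per member yields $\mathcal{S}_0 \subseteq \mathcal{S}$ with $|\mathcal{S}_0| \leq |X|$ and $\bigvee \mathcal{S}_0 = \bigvee \mathcal{S}$ (if $\mathcal{S}_0 = \emptyset$ then $\bigvee \mathcal{S} = 0 \in \mathcal{J}$). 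Re‑indexing, write $\mathcal{S}_0 = \{[B_x]_I : x \in X\}$ with each $B_x \in J$; then $\bigvee \mathcal{S}_0$ is represented by $\nabla_{x} B_x$, which lies in $J$ by normality of $J$, so $\bigvee \mathcal{S} \in \mathcal{J}$. An ideal of a complete Boolean algebra containing $\bigvee$ of each of its subsets is principal — generated by $a^* := \bigvee \mathcal{J}$ — so writing $a^* = [A']_I$ and $A = Z \setminus A'$ gives $J = I \restriction A$, with $A \in I^+$ because $J$, being proper, does not contain $Z$.

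For the converse I would argue the contrapositive. Suppose $I$ is not $|X|^+$‑saturated; extending an antichain of size $|X|^+$ to a maximal one, fix a maximal antichain $\la A_\alpha : \alpha < \theta \ra$ of $\mathcal{B}$ with $\theta \geq |X|^+$, and define
$$J = \{ B \subseteq Z : |\{ \alpha < \theta : B \cap A_\alpha \in I^+ \}| \leq |X| \}.$$
Downward closure, closure under finite unions, and $I \subseteq J$ are immediate. Normality of $J$ uses normality of $I$: for $\la B_x : x \in X \ra \subseteq J$ we have $A_\alpha \cap \nabla_x B_x = \nabla_x(A_\alpha \cap B_x)$, so if $A_\alpha \cap \nabla_x B_x \in I^+$ then $A_\alpha \cap B_x \in I^+$ for some $x$; hence the ``bad set'' of $\nabla_x B_x$ is contained in a union of $|X|$ sets each of size $\leq |X|$, so $\nabla_x B_x \in J$. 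Moreover $Z \notin J$ since $\theta \geq |X|^+$, so $J$ is a proper normal ideal extending $I$. Finally each $A_\alpha$ lies in $J$, its bad set being exactly $\{\alpha\}$ by the antichain property; so if $J$ were $I \restriction A$ for some $A \in I^+$, we would get $A \cap A_\alpha \in I$ for all $\alpha$, contradicting maximality of the antichain.

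I expect the main obstacle to be the closure‑under‑arbitrary‑suprema step of the forward direction: one must use the chain condition to replace an arbitrary supremum by one over $\leq |X|$ many elements, and then recognize that such a supremum is concretely witnessed by a diagonal union — the point where both the saturation of $I$ (suprema $=$ diagonal unions) and the normality of $J$ are invoked. The converse is routine once the right ideal $J$ is written down; the only slightly delicate point there is deriving normality of $J$ from normality of $I$.
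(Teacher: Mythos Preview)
Your proof is correct and takes essentially the same approach as the paper. For the forward direction the paper goes directly to the punchline of your argument: it picks a maximal antichain $\{A_x : x \in X\}$ in $J \cap I^+$ (of size $\leq |X|$ by saturation), observes that $\nabla_x A_x$ is the $\subseteq_I$-largest element of $J$, and concludes $J = I \restriction (Z \setminus \nabla_x A_x)$ --- this is exactly the principal generator $a^*$ you produce via the ``$\mathcal{J}$ is closed under arbitrary suprema'' detour. For the converse the paper defines $J$ as the ideal generated by the $\leq |X|$-fold suprema $\Sigma_{\alpha \in Y}[A_\alpha]$ of the maximal antichain, which (using maximality and the antichain condition) is literally the same ideal as your ``$B$ meets at most $|X|$ many $A_\alpha$ positively.''
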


\begin{proof}
Suppose $I$ is $|X|^+$-saturated.  Let $Y \subseteq X$ and $\{ A_x : x \in Y \}$ be such that $\{ [A_x]_I : x \in Y \}$ is an antichain in $\p(Z)/I$ of $I$-positive sets that are $J$-measure-zero, and is maximal among all such collections.  Then $[\nabla A_x]_I$ is the $\subseteq_I$-largest element of $J \cap I^+$, so $J = I \restriction (Z \setminus \nabla A_x)$.  Now suppose $I$ is not $|X|^+$-saturated, and let $\{ A_\alpha : \alpha < \delta \}$ be a maximal antichain where $\delta \geq |X|^+$.  Let $J$ be the ideal generated by $\bigcup \{ \Sigma_{\alpha \in Y} [A_\alpha] : Y \in \p_{|X|^+}(\delta) \}$.  Then $J$ is a proper normal ideal extending $I$.  $J$ cannot be equal $I \restriction A$ for some $A \in I^+$ because if this were so, there would some $\alpha$ such that $A \cap A_\alpha \in I^+$.  $A \cap A_\alpha \in J$ by construction, but every $I$-positive subset of $A$ is $(I \restriction A)$-positive. 
\end{proof}

A partial order is said to be \emph{$\kappa$-dense} if it has a dense subset of size $\leq \kappa$.  It is said to be \emph{nowhere $\kappa$-dense} if it is not $\kappa$-dense below any condition.  An ideal is said to be $\lambda$-dense or nowhere $\lambda$-dense when its associated Boolean algebra has these properties.  

\begin{lemma}
\label{nodense}
Suppose $\mu^+=\kappa\leq\lambda$, $\nu \leq \mu$ is such that $\nu^{<\nu} = \nu$, and $Z \subseteq \p_\kappa(\lambda)$ is stationary.  Let $\mathbb P = \add(\nu,\theta)$ for some $\theta \geq \kappa$.  Then in $V^\mathbb{P}$, there are no normal, $\kappa$-complete, $\lambda$-dense ideals on $Z$.
\end{lemma}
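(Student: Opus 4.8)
The plan is to argue by contradiction, exploiting the fact that $\add(\nu,\theta)$ adds many mutually generic copies of $\add(\nu,\kappa)$, and that a $\lambda$-dense ideal would give rise to an incompatible family of conditions in the forcing. Suppose toward a contradiction that in $V[G]$, where $G \subseteq \mathbb P = \add(\nu,\theta)$ is generic, there is a normal, $\kappa$-complete, $\lambda$-dense ideal $J$ on $Z$. A $\lambda$-dense ideal is automatically $\lambda^+$-saturated, hence saturated (in the sense $|{\lambda}|^+$-saturated, using $Z \subseteq \p_\kappa(\lambda)$ so $|X| = \lambda$). First I would fix a dense set $\{ a_\eta : \eta < \lambda \}$ in $\p(Z)/J$ of size $\lambda$, together with a refinement to pairwise disjoint $I$-positive representatives if convenient; the point is that below any condition there are at most $\lambda$ many ``directions,'' so any antichain has size $\leq \lambda$, and more importantly any nested descending sequence of positive sets must stabilize in the quotient after $<\lambda^+$ steps among the $a_\eta$.

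The heart of the argument is a counting/mutual-genericity contradiction. The forcing $\mathbb P = \add(\nu,\theta)$, being $\nu^+$-c.c.\ (since $\nu^{<\nu} = \nu$) and $\leq \theta$-many coordinates, decomposes as a product $\add(\nu,\kappa) \times \add(\nu, \theta)$ in many ways, so $V[G]$ contains, for every $\xi < \theta$, a generic filter $G_\xi$ for a copy of $\add(\nu,\kappa)$ living on a block of $\kappa$ coordinates disjoint from some fixed ``ground'' block, and these blocks can be chosen mutually generic over a common inner model. The key step: I would use $\lambda$-density of $J$ to find, inside $V[G]$, a sequence $\la C_\alpha : \alpha < \lambda \ra$ of $J$-positive sets witnessing that $J$ is \emph{not} $\lambda$-dense — by building, via the mutually generic copies of $\add(\nu,\kappa)$, a tree of descending positive conditions that does not close off at level $\lambda$. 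Concretely, one wants to embed a non-$\lambda$-dense poset (e.g.\ $\add(\nu,\kappa)$ itself, or a suitable binary tree $2^{<\nu}$ of length that makes $\nu^{<\nu}$ large under a cover, but here $\nu^{<\nu}=\nu$, so instead use that $\add(\nu,\kappa)$ has maximal antichains of size $\kappa \leq \lambda$ but is \emph{not} $\lambda$-dense because $|\add(\nu,\kappa)| = \kappa^{<\nu}$ can exceed $\lambda$... no —) — so the cleaner route is:

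Rather, I would invoke Lemma \ref{rest}: since a $\lambda$-dense $J$ is $\lambda^+$-saturated, every normal ideal $J' \supseteq J$ is of the form $J \restriction A$. So it suffices to build, in $V[G]$, a normal $\kappa$-complete ideal $J' \supseteq J$ that is \emph{not} of this form, contradicting saturation and hence density. To build $J'$, I would use the coordinates of $\add(\nu,\theta)$ beyond $\kappa$: for each stationary $A \in J^+$ choose a ``fresh'' block of coordinates and a condition pattern, using $Z$ stationary to diagonalize, producing a descending $\lambda^+$-chain (or an antichain of size $\lambda^+$) of $J$-positive sets. The mutual genericity of infinitely many (indeed $\theta$-many) copies of $\add(\nu,\kappa)$ over $V$ is what guarantees these sets are genuinely $J$-positive and pairwise $J$-inequivalent: if some inclusion $C_\alpha \subseteq_J C_\beta$ held, it would be decided by a condition in $\mathbb P$ using only finitely many of the independent blocks, contradicting the independence. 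The main obstacle — and the step I expect to require the most care — is precisely verifying this genericity/independence bookkeeping: showing that the $\lambda^+$ many positive sets extracted from the mutually generic blocks remain positive for $J$ (not just for $I$ or $\bar I$) and form an antichain of size $\geq \lambda^+$ in $\p(Z)/J$, thereby contradicting $\lambda^+$-saturation, which any $\lambda$-dense ideal must satisfy. Once that contradiction is in hand, $\lambda$-density of any normal $\kappa$-complete ideal on $Z$ is impossible in $V^{\mathbb P}$, completing the proof.
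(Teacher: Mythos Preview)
Your proposal does not locate the main idea, and the sketch you give cannot be completed as written. The core difficulty is that $J$ is an object of $V[G]$, defined in terms of the entire generic, so ``fresh blocks of coordinates'' have no a priori relationship to $J$-positivity or $J$-incompatibility. Your plan to produce a $\lambda^+$-antichain in $\p(Z)/J$ by choosing condition patterns on mutually generic blocks never specifies what sets those patterns are supposed to pick out, or why distinct blocks yield $J$-incompatible sets; the independence of the blocks is a statement about $\mathbb P$, not about $J$. Your invocation of Lemma~\ref{rest} is also in the wrong direction: you want to build a normal $J' \supseteq J$ not of the form $J \restriction A$, but since $J$ is $\lambda^+$-saturated, Lemma~\ref{rest} tells you precisely that no such $J'$ exists.

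What the paper does, and what your proposal is missing entirely, is to pass to a \emph{ground-model} ideal. Given $p \Vdash \dot J$ is normal, $\kappa$-complete, $\lambda^+$-saturated on $Z$, define in $V$ the ideal $I = \{ X \subseteq Z : p \Vdash X \in \dot J \}$; this is normal, $\kappa$-complete, and (via the antichain-preserving map $[X]_I \mapsto (\Vert X \in \dot J^+ \Vert, [X]_{\dot J})$) $\lambda^+$-saturated, hence precipitous. Now Foreman's Duality Theorem (Theorem~\ref{duality}) applies and gives, in $V[H]$, an isomorphism $\p(Z)/\bar I \cong \p^V(Z)/I * \add(\nu,\dot\eta)$ with $\dot\eta \geq \lambda^+$ (since $j(\kappa) > \lambda$). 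So $\p(Z)/\bar I$ is nowhere $\lambda$-dense. Finally---and this is the correct use of Lemma~\ref{rest}---since $\bar I$ is $\lambda^+$-saturated and $\bar I \subseteq J$, we get $J = \bar I \restriction A$ for some $A$, so $J$ inherits the nowhere-$\lambda$-density. The step you could not carry out (linking the Cohen coordinates to the quotient by $J$) is exactly what duality provides: it identifies the remainder $\add(\nu,\dot\eta)$ sitting inside $\p(Z)/\bar I$, and hence inside $\p(Z)/J$.
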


\begin{proof}Since a $\lambda$-dense ideal is $\lambda^+$-saturated, it suffices to consider $\mathbb P$-names for $\lambda^+$-saturated normal ideals on $Z$.
Suppose $p \Vdash \dot{J}$ is a $\kappa$-complete, $\lambda^+$-saturated, normal ideal on $Z$.  Let $I = \{ X \subseteq Z : p \Vdash X \in \dot{J} \}$.  It is easy to check that $I$ is normal and $\kappa$-complete.  The map $\sigma : \p(Z) / I \to \mathcal{B}(\mathbb{P} \restriction p) * \p(Z) / \dot{J}$ that sends $X$ to $( || \check{X} \in \dot{J}^+ || , \dot{[X]_J} )$ is order-preserving and antichain-preserving.  Since $\nu^{<\nu} = \nu$, $\mathbb P$ is $\kappa$-c.c., so the two-step iteration $\mathbb{P} \restriction p * \p(Z) / \dot{J}$ is $\lambda^+$-c.c.  Thus $I$ is $\lambda^+$-saturated.

Let $H$ be $\mathbb{P}$-generic over $V$ with $p \in H$.  Since $\mathbb{P}$ is $\kappa$-c.c., $\bar{I}$ remains normal.  By Theorem~\ref{duality}, the map $e : q \mapsto (1,\dot{j(q)})$ is a regular embedding of $\mathbb P$ into $\p(Z)/I * \dot{j(\mathbb P)}$.  Thus in $V[H]$, $\p(Z) / \bar{I} \cong \p^V (Z) / I * \add(\nu,\dot \eta)$, where $\Vdash \dot \eta = \ot(j(\theta) \setminus j[\theta])$.  By the saturation of $I$, $\Vdash j(\kappa) = \lambda^+$, so $\Vdash \dot \eta \geq \lambda^+$.

$\bar{I}$ is normal and $\lambda^+$-saturated, and $\bar{I} \subseteq J$.  By Lemma~\ref{rest}, there is $A \in \bar{I}^+$ such that $J = \bar{I} \restriction A$.  
Since $\add(\nu,\eta)$ is nowhere $\lambda$-dense, $\p(Z)/\bar{I}$ is nowhere $\lambda$-dense.  Thus $J$ is not $\lambda$-dense. 
\end{proof}

%

Thus we may rid the universe of dense ideals that concentrate on $\p_\kappa(\lambda)^V$.  This finishes the job if $\kappa = \lambda$, but not necessarily in other cases.  For example, Gitik  \cite{gitiknonsplitting} showed that if $V \subseteq W$ are models of set theory, $\kappa < \lambda$ are regular in $W$, and there is a real number in $W \setminus V$, then $\p_\kappa(\lambda)^W \setminus \p_\kappa(\lambda)^V$ is stationary.   In order to take care of such problems, we use some arguments of Laver and Hajnal-Juhasz that are reproduced in \cite{foremanhandbook}.


The notation
$\left( \begin{array}{c}
\alpha \\ \beta
\end{array} \right)
\rightarrow
\left( \begin{array}{c}
\gamma \\ \delta
\end{array} \right)_\eta$
stands for the assertion that for every $f : \alpha \times \beta \to \eta$, there is $A \in [\alpha]^\gamma$ and $B \in [\beta]^\delta$ such that $f$ is constant on $A\times B$.  As usual with arrow notations, if ordinals on the left side are increased and ordinals on the right side are decreased, then we get a weaker statement.

\begin{lemma}
\label{densepart}
Suppose there is a $\lambda$-dense, $\kappa$-complete, normal ideal $I$ on $\p_\kappa(\lambda)$ such that every $I$-positive set has cardinality $\geq \eta$.  Then for $\mu,\nu < \kappa$,
$$\left( \begin{array}{c}
\lambda^+ \\ \lambda^{<\kappa}
\end{array} \right)
\rightarrow
\left( \begin{array}{c}
\mu \\ \eta
\end{array} \right)_\nu.$$
\end{lemma}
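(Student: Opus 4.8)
The plan is to use $\lambda$-density to attach, to $\lambda^+$-many ``columns'' of a coloring, one common $I$-positive set, from which a large homogeneous ``row'' is then carved out using $\kappa$-completeness. Fix a bijection identifying $\lambda^{<\kappa}$ with $Z := \p_\kappa(\lambda)$, let $f : \lambda^+ \times Z \to \nu$ be arbitrary, and, using that $I$ is $\lambda$-dense, fix a family $D \subseteq I^+$ with $|D| \leq \lambda$ such that every $S \in I^+$ has some $B \in D$ with $B \setminus S \in I$ (choose $I$-positive representatives for a dense subset of $\p(Z)/I$).

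First I would, for each $\xi < \lambda^+$, consider the column $f_\xi := f(\xi,\cdot) : Z \to \nu$. As $I$ is $\kappa$-complete and proper and $\nu < \kappa$, not every piece of the partition $Z = \bigcup_{i<\nu} f_\xi^{-1}(\{ i \})$ lies in $I$; choose a color $i_\xi < \nu$ with $T_\xi := f_\xi^{-1}(\{ i_\xi \}) \in I^+$, and then $B_\xi \in D$ with $B_\xi \setminus T_\xi \in I$. This produces a map $\xi \mapsto (i_\xi, B_\xi) \in \nu \times D$ whose range has size at most $\lambda$.

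Next, the fibers of $\xi \mapsto (i_\xi, B_\xi)$ partition $\lambda^+$ into at most $\lambda$ classes, so one of them, say $A_0$, has size $\lambda^+$; fix $A \subseteq A_0$ with $|A| = \mu$ and let $(i^*, B^*)$ be the common value of $(i_\xi, B_\xi)$ for $\xi \in A$. For each $\xi \in A$ we have $\{ z \in B^* : f(\xi,z) \neq i^* \} = B^* \setminus T_\xi \in I$, and since $|A| = \mu < \kappa$ and $I$ is $\kappa$-complete, $N := \bigcup_{\xi \in A}(B^* \setminus T_\xi) \in I$, so $B^* \setminus N \in I^+$. By the hypothesis that every $I$-positive set has cardinality at least $\eta$, I may pick $B \subseteq B^* \setminus N$ with $|B| = \eta$. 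Then $f(\xi,z) = i^*$ for every $\xi \in A$ and $z \in B$, i.e.\ $f$ is constant on $A \times B$, as required.

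The step I expect to be the crux is the pigeonhole: it is precisely $\lambda$-density that lets me replace the $\lambda^+$-many a priori unrelated $I$-positive sets $T_\xi$ by representatives from a pool of size $\leq \lambda$, thereby forcing $\lambda^+$-many columns to share a single positive set $B^*$ with $B^* \subseteq_I T_\xi$; a single $\kappa$-complete intersection then yields the row. Without density one could only stabilize the color $i^*$, and $\bigcap_{\xi \in A} f_\xi^{-1}(\{ i^* \})$ might be empty once $|A|$ reaches $\lambda^+$. The remaining ingredients — $\kappa$-completeness of $I$ and the cardinality hypothesis on $I$-positive sets — are what make the row nonempty of the correct size $\eta$, and enter only in routine ways.
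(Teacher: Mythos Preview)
Your proof is correct and follows essentially the same route as the paper: for each column use $\kappa$-completeness to pick a color with $I$-positive preimage, pigeonhole through a dense family of size $\leq \lambda$ to stabilize both the color and an $I$-positive set $B^*$ with $B^* \subseteq_I T_\xi$ on $\lambda^+$-many columns, then take $\mu$-many of these and use $\kappa$-completeness once more to get an $I$-positive (hence size $\geq \eta$) homogeneous row. The paper compresses the pigeonhole and the final intersection into single sentences, but the argument is the same.
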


\begin{proof}
Let $\theta = \lambda^{<\kappa}$, and enumerate $\p_\kappa(\lambda)$ as $\la z_\alpha : \alpha < \theta \ra$.  Let $f : \lambda^+ \times \theta \to \nu$.  By $\kappa$-completeness, for each $\alpha < \lambda^+$, there is $\gamma < \nu$ such that $X_\alpha :=
\{ z_\beta : f(\alpha,\beta) = \gamma \} \in I^+$.  By $\lambda$-density, there is a set $S \in [\lambda^+]^{\lambda^+}$, a set 
 $D \in I^+$, and a $\gamma^* < \nu$ such that for all $\alpha \in S$, $D \subseteq_I X_\alpha$ and $f(\alpha,\beta) = \gamma^*$ for $z_\beta \in X_\alpha$.  Let $A \subseteq S$ have size $\mu$.  Since $\bigcap_{\alpha \in A} X_\alpha$ is $I$-positive, there is a set $B \subseteq \theta$ of size $\geq \eta$ such that for all $\alpha \in A$ and all $\beta \in B$, $f(\alpha,\beta) = \gamma^*$.
\end{proof}

\begin{lemma}
\label{killpart}
Suppose $\theta$ is regular and $\mu < \theta$ is such that $\mu^{<\mu} = \mu$.  If $G \subseteq \add(\mu,\theta)$ is generic, then in $V[G]$, 
$$\left( \begin{array}{c}
\theta^+ \\ \theta
\end{array} \right)
\nrightarrow
\left( \begin{array}{c}
\mu \\ \theta
\end{array} \right)_2.$$
\end{lemma}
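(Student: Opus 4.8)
The plan is to produce in $V[G]$ an explicit coloring $f:\theta^+\times\theta\to 2$ with no monochromatic rectangle $A\times B$ where $|A|=\mu$ and $|B|=\theta$. Since $\mu^{<\mu}=\mu$, $\add(\mu,\theta)$ is $\mu^+$-c.c., and since $|\theta\times\theta|=\theta$ we may view it as $\add(\mu,\theta\times\theta)$, i.e.\ the $<\mu$-support product $\prod_{\xi<\theta}\add(\mu,\{\xi\}\times\theta)$; so $G$ yields a matrix $\la c_{(\xi,\eta)} : (\xi,\eta)\in\theta\times\theta\ra$ of subsets of $\mu$, with the $\xi$-th ``block'' $\la c_{(\xi,\eta)}:\eta<\theta\ra$ generic over $V_\xi:=V[\la c_{(\xi',\eta)}:\xi'\neq\xi\ra]$. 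In $V$, fix by a recursion of length $\theta^+$ a family $\la d_\alpha : \alpha<\theta^+\ra$ of functions $\theta\to\theta$ that is pairwise eventually different: $\{\xi<\theta : d_\alpha(\xi)=d_\beta(\xi)\}$ is bounded in $\theta$ for $\alpha\neq\beta$ (at stage $\alpha$, enumerate $\{d_\beta:\beta<\alpha\}$ as $\la e_i:i<\theta\ra$ and let $d_\alpha(\xi)$ be least outside $\{e_i(\xi):i\leq\xi\}$, which exists as $\theta$ is a regular cardinal). Set $f(\alpha,\xi)=c_{(\xi,\,d_\alpha(\xi))}(0)$.

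Suppose some condition forces $\dot A\in[\theta^+]^\mu$, $\dot B\in[\theta]^\theta$, and $f\restriction\dot A\times\dot B$ constant; by the bit-flipping automorphism of the poset we may take the value to be $1$. Fix a generic $G$ below it, and set $A=\dot A^G$, $B=\dot B^G$. Name the increasing enumeration $\dot a:\mu\to\theta^+$ of $\dot A$; applying the $\mu^+$-c.c.\ to each $\dot a(i)$ yields a set $C_0\subseteq\theta$ with $|C_0|\leq\mu$ such that $A\in V[\la c_{(\xi,\eta)}:\xi\in C_0\ra]$. Let $\rho_A<\theta$ be an upper bound for all the agreement sets $\{\xi:d_\alpha(\xi)=d_\beta(\xi)\}$ with $\alpha\neq\beta\in A$ (a supremum of $\mu<\cf(\theta)=\theta$ ordinals below $\theta$), so $\alpha\mapsto d_\alpha(\xi)$ is injective on $A$ for $\xi\geq\rho_A$. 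Since $|B|=\theta$ and $|C_0\cup\rho_A|<\theta$, pick $\xi^*\in B$ with $\xi^*\notin C_0$ and $\xi^*\geq\rho_A$. Then $Q:=\{((\xi^*,d_\alpha(\xi^*)),0):\alpha\in A\}$ has size $|A|=\mu$, it lies in $V_{\xi^*}$ because $A$ does (here we use $\xi^*\notin C_0$), and $\xi^*\in B$ forces $f(\alpha,\xi^*)=1$ for every $\alpha\in A$, i.e.\ the $\xi^*$-th block of $G$ is identically $1$ on $Q$. But that block is generic over $V_{\xi^*}$ for a poset all of whose conditions have size $<\mu$, while $Q\in V_{\xi^*}$ has size $\mu$, so the conditions taking value $0$ somewhere on $Q$ form a dense set belonging to $V_{\xi^*}$ — a contradiction. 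Hence $f$ has no monochromatic rectangle of color $1$, and by the automorphism none of color $0$ either.

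The main obstacle is to pit the ``width'' $\mu$ of a would-be monochromatic rectangle against a single generic block of exactly that size. The chain condition is used not to drag $A$ into the ground model — too crude, since $A$ can be genuinely new — but only to confine the $\theta^+$-many ordinal-names constituting $\dot A$ to fewer than $\theta$ columns; the product decomposition then isolates one further column $\xi^*$ over whose complement $A$ is ``old'', and eventual difference forces $\mu$ distinct Cohen coordinates in that column into the $\xi^*$-slice of the rectangle. Once this bookkeeping is arranged, the recursion producing $\la d_\alpha\ra$, the density argument in the block $\xi^*$, and the reduction of color $0$ to color $1$ are all routine.
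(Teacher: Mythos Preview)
Your argument is correct and rests on the same core ideas as the paper's: fix in $V$ a $\theta^+$-sized family with an almost-disjointness property, read the coloring off Cohen bits indexed along each family member, use the $\mu^+$-c.c.\ to confine a putative $A\in[\theta^+]^\mu$ to a small sub-product, and contradict genericity of the complementary factor. The paper takes an almost-disjoint family $\{X_\alpha:\alpha<\theta^+\}\subseteq[\theta]^\theta$ and sets $f(\alpha,\beta)=G(\gamma^\alpha_\beta,0)$ with $\gamma^\alpha_\beta$ the $\beta$-th element of $X_\alpha$; you use pairwise eventually different functions $d_\alpha:\theta\to\theta$ after reindexing by $\theta\times\theta$ --- a cosmetically different packaging of the same device. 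The one structural difference is the product split: the paper factors $G=G_0\times G_1$ at an initial segment $\zeta<\theta$ with $A\in V[G_0]$ and shows that for \emph{every} $\eta$ past a bound $\delta$ the slice $\{f(\alpha,\eta):\alpha\in A\}$ is nonconstant, so any homogeneous $B$ is contained in $\delta$; you instead isolate a single column $\xi^*\in B$ outside the support of $A$ and derive the contradiction there. The paper's route is marginally cleaner (no reindexing, and $B$ enters only in the last line), but both are standard variants of the same argument. Your bit-flipping reduction is harmless but unnecessary: your own density step works verbatim for either color.
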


\begin{proof}
In $V$, choose an almost-disjoint family $\{ X_\alpha : \alpha < \theta^+ \} \subseteq \p(\theta)$, and for each $\alpha$, let $\la \gamma^\alpha_\beta : \beta < \theta \ra$ enumerate $X_\alpha$ in increasing order.  In $V[G]$, let $f : \theta^+ \times \theta \to 2$ be defined by $f(\alpha,\beta) = G(\gamma^\alpha_\beta,0)$.  Let $A \subseteq \theta^+$ be a set of size $\mu$ in $V[G]$.  By the chain condition, there is a $\zeta <\theta$ such that $G = G_0\times G_1$, where $G_0$ is $\add(\mu,\zeta)$-generic, and $A \in V[G_0]$.
In $V[G_0]$, $\delta$ be such that $\zeta< \delta <\theta$ and $\{ X_\alpha \setminus \delta : \alpha\in A \}$ is pairwise disjoint.  
For any $p \in \add(\mu,\theta \setminus \zeta)$ and any $\eta \geq \delta$, there are $q \leq p$ and $\alpha,\beta \in A$ such that $q(\gamma_\eta^\alpha,0) \not= q(\gamma_\eta^\beta,0)$.  Since $G_1$ is generic, we have that for all $\eta \geq \delta$, there are $\alpha,\beta \in A$ such that $f(\alpha,\eta) \not= f(\beta,\eta)$.  Thus there is no $B \subseteq \theta$ of size $\theta$ such that $f$ is constant on $A \times B$.
\end{proof}

We can now prove Theorem \ref{dense}.  Suppose that in $V$, $I$ is a nonregular, $\kappa$-complete, normal ideal on $\p_\kappa(\lambda)$, where $\kappa = \mu^+$ and $\cf(\mu)$ is uncountable.  Let $\theta \geq \lambda^\mu$ be regular and such that $\theta^\mu = \theta$.  Let $G \subseteq \add(\omega,\theta)$ be generic.  By Lemma \ref{preservenonreg}, $\bar I$ is nonregular in $V[G]$.  Suppose $Z \subseteq \p_\kappa(\lambda)$ has cardinality $<\theta$.  Then there is $\zeta < \theta$ such that $G = G_0\times G_1$, where $G_0$ is $\add(\omega,\zeta)$-generic, and $Z \in V[G_0]$.  By Lemma \ref{nodense}, there is no $\lambda$-dense, $\kappa$-complete, normal ideal concentrating on $Z$ in $V[G]$.  For $\kappa$-complete normal ideals on $\p_\kappa(\lambda)$ in $V[G]$ that do not have any positive set of size $<\theta$, Lemma \ref{densepart} implies that if such an ideal were $\lambda$-dense, then we would have $\left( \begin{array}{c}
\lambda^+ \\ \theta
\end{array} \right)
\rightarrow
\left( \begin{array}{c}
\mu \\ \theta
\end{array} \right)_\mu,$
since $\lambda^\mu = \theta$ in $V[G]$.  
But Lemma \ref{killpart} implies that the weaker relation
$\left( \begin{array}{c}
\theta^+ \\ \theta
\end{array} \right)
\rightarrow
\left( \begin{array}{c}
\omega \\ \theta
\end{array} \right)_2$
fails in $V[G]$.  Thus $V[G]$ has no $\lambda$-dense, $\kappa$-complete, normal ideal on $\p_\kappa(\lambda)$.

\bibliographystyle{amsplain.bst}
\bibliography{masterbib.bib}
\end{document}